 \DeclareMathOperator*{\argmin}{argmin}
 \numberwithin{equation}{section}
 \def \R{{\mathbb{R}}}
 \def \dom{{{\rm dom}\,}}
 \def\argmin{\mathop{\rm arg\,min}}
 \def\Argmin{\mathop{\rm Arg\,min}}
 \newtheorem{proposition}{Proposition}[section]
 \newtheorem{definition}{Definition}[section]
 \newtheorem{lemma}{Lemma}[section]
 \newtheorem{theorem}{Theorem}[section]
 \newtheorem{remark}{Remark}[section]
 \newtheorem{assumption}{Assumption}[section]
 \theoremstyle{definition}
 \newtheorem{example}{Example}[section]
\title{A Proximal Difference-of-convex Algorithm with Extrapolation}
\author{
Bo Wen \thanks{Department of Mathematics, Harbin Institute of Technology, Harbin, P.R. China. Current address: Department of Applied Mathematics, The Hong Kong Polytechnic University, Hong Kong, P.R. China.
E-mail: {bo.wen@connect.polyu.hk}.}
\and Xiaojun Chen \thanks{Department of Applied Mathematics, The Hong Kong Polytechnic University, Hong Kong, P.R. China. This author's work was supported in part by Hong Kong Research Grants Council PolyU153000/15p. E-mail: {maxjchen@polyu.edu.hk}.}
\and Ting Kei Pong \thanks{Department of Applied Mathematics, The Hong Kong Polytechnic University, Hong Kong, P.R. China.
This author's work was supported in part by Hong Kong Research Grants Council PolyU253008/15p. E-mail: {tk.pong@polyu.edu.hk}.}
}
\date{June 17, 2017}
\begin{document}

\maketitle

\begin{abstract}

We consider a class of difference-of-convex (DC) optimization problems whose objective is level-bounded and is the sum of a smooth convex function with Lipschitz gradient, a proper closed convex function and a continuous concave function. While this kind of problems can be solved by the classical difference-of-convex algorithm (DCA) \cite{PT1997}, the difficulty of the subproblems of this algorithm depends heavily on the choice of DC decomposition. Simpler subproblems can be obtained by using a specific DC decomposition described in \cite{PT1998}. This decomposition has been proposed in numerous work such as \cite{GTT2015}, and we refer to the resulting DCA as the proximal DCA. Although the subproblems are simpler, the proximal DCA is the same as the proximal gradient algorithm when the concave part of the objective is void, and hence is potentially slow in practice. In this paper, motivated by the extrapolation techniques for accelerating the proximal gradient algorithm in the convex settings, we consider a proximal difference-of-convex algorithm with extrapolation to possibly accelerate the proximal DCA. We show that any cluster point of the sequence generated by our algorithm is a stationary point of the DC optimization problem for a fairly general choice of extrapolation parameters: in particular, the parameters can be chosen as in FISTA with fixed restart \cite{DC2015}. In addition, by assuming the Kurdyka-{\L}ojasiewicz property of the objective and the differentiability of the concave part, we establish global convergence of the sequence generated by our algorithm and analyze its convergence rate. Our numerical experiments on two difference-of-convex regularized least squares models show that our algorithm usually outperforms the proximal DCA and the general iterative shrinkage and thresholding algorithm proposed in \cite{GZLHY2013}.

\vspace{5mm}

\noindent {\bf Keywords:} difference-of-convex problems, nonconvex, nonsmooth, extrapolation, Kurdyka-{\L}ojasiewicz inequality \hspace{2mm}
\vspace{2mm}

\noindent {\bf AMS subject classifications.} \ 90C30, 65K05, 90C26
\end{abstract}

\section{Introduction}

Difference-of-convex (DC) optimization problems are problems whose objective can be written as the difference of a proper closed convex function and a continuous convex function. They arise in various applications such as digital communication system \cite{ASP2014}, assignment and power allocation \cite{SRL2014} and compressed sensing \cite{YLHX2015}; we refer the readers to Sections~7.6 to 7.8 of the recent monograph \cite{T2016} for more applications of DC optimization problems.

A classical algorithm for solving DC optimization problems is the so-called DC algorithm (DCA), which was proposed by Tao and An \cite{PT1997}; see also \cite{BB2016,GTT2015,TP2005,TPH2012,TPL1999} for more recent developments.\footnote{We would also like to point to the article ``DC programming and DCA" on the person webpage of Le Thi Hoai An:
\url{http://www.lita.univ-lorraine.fr/~lethi/index.php/en/research/dc-programming-and-dca.html}} In each iteration, this algorithm replaces the concave part of the objective by a linear majorant and solves the resulting convex optimization problem. The difficulty of the subproblems involved relies heavily on the choice of DC decomposition of the objective function. When the objective can be written as the sum of a smooth convex function with Lipschitz gradient, a proper closed convex function and a continuous concave function, simpler subproblems can be obtained by using a specific DC decomposition described in \cite[Eq.~16]{PT1998}. This idea appears in numerous work and is also recently adopted in \cite{GTT2015}, where they proposed the so-called proximal DCA.\footnote{This algorithm was called ``the proximal difference-of-convex decomposition algorithm" in \cite{GTT2015}. As noted in \cite{GTT2015}, their algorithm is the DCA applied to a specific DC decomposition.} This algorithm not only majorizes the concave part in the objective by a linear majorant in each iteration, but also majorizes the smooth convex part by a quadratic majorant. When the proximal mapping of the proper closed convex function is easy to compute, the subproblems of the proximal DCA can be solved efficiently. However, this algorithm may take a lot of iterations: indeed, when the concave part of the objective is void, the proximal DCA reduces to the proximal gradient algorithm for convex optimization problems, which can be slow in practice \cite[Section~5]{DC2015}.

It is then tempting to incorporate techniques to possibly accelerate the proximal DCA while not significantly increasing the computational cost per iteration. One such technique is to perform extrapolation. More precisely, this means adding {\em momentum} terms that involve previous iterates for updating the current iterate. Such technique has been adopted for convex optimization problems, dating back to Polyak's heavy ball method \cite{P1964}. More recent examples of such techniques are Nesterov's extrapolation techniques \cite{N1983,N2004,N2007,N20071} which have been extensively used for accelerating the proximal gradient algorithm and its variants for convex optimization problems. One representative algorithm that incorporates these techniques is the fast iterative shrinkage-thresholding algorithm (FISTA) \cite{BT2009,N2007}. It is known that the function values generated by FISTA converges at a rate of $O(1/k^2)$, which is faster than the $O(1/k)$ convergence rate of the proximal gradient algorithm. We refer the readers to \cite{BCG11,DC2015} for more examples of such algorithms.

In view of the success of extrapolation techniques in accelerating the proximal gradient algorithm for convex optimization problems, and noting that the proximal gradient algorithm and the proximal DCA are the same when applied to convex problems, in this paper, we incorporate extrapolation techniques to possibly accelerate the proximal DCA in the general DC settings.\footnote{It is also discussed at the end of the numerical section of \cite{GTT2015} that suitably incorporating extrapolation techniques into the proximal DCA can accelerate the algorithm empirically.} We call our algorithm the proximal DCA with extrapolation (${\rm pDCA}_e$). We prove that, for a fairly general choice of extrapolation parameters, if the objective is level-bounded, then any cluster point of the sequence generated by our algorithm is a stationary point of the DC optimization problem. The choice of parameters is general enough to cover those used in FISTA with fixed restart \cite{DC2015}. Additionally, by assuming that the objective is a level-bounded Kurdyka-{\L}ojasiewicz function (see, for example, \cite{ABRS2010}) and the concave part is differentiable with a locally Lipschitz gradient, we establish global convergence of the whole sequence generated by our algorithm. We also analyze the convergence rate based on the Kurdyka-{\L}ojasiewicz exponent. Finally, we perform numerical experiments on $\ell_{1-2}$ \cite{YLHX2015} and logarithmic \cite{CWB2008} regularized least squares problems. Our numerical experiments show that the ${\rm pDCA}_e$ usually outperforms the proximal DCA and the general iterative shrinkage and thresholding algorithm (GIST) proposed in \cite{GZLHY2013}.

The rest of this paper is organized as follows. In Section~\ref{sec2}, we introduce notation and discuss some preliminary materials. In Section~\ref{sec3}, we describe the DC optimization problem we study in this paper and present our algorithm ${\rm pDCA}_e$. The convergence of the sequence generated by the algorithm and the convergence rate are studied in Section~\ref{sec4}. Finally, we present numerical experiments in Section~\ref{sec5}.

\section{Notation and preliminaries}\label{sec2}
In this paper, we use $\R^n$ to denote the $n$-dimensional Euclidean space with inner product $\langle\cdot,\cdot\rangle$ and Euclidean norm $\|\cdot\|$, and use $\|\cdot\|_1$ and $\|\cdot\|_\infty$ to denote the $\ell_1$ norm and the $\ell_\infty$ norm, respectively. Given a matrix $A \in \R^{m\times n}$, the transpose of $A$ is denoted by $A^{T}$. Moreover, for a symmetric matrix $A \in \R^{n\times n}$, we use $\lambda_{\rm max}(A)$ and $\lambda_{\rm min}(A)$ to denote its largest and smallest eigenvalues, respectively. In addition, for a nonempty closed set $\mathcal{C} \subseteq \R^n$, we denote the distance from a point $x\in \R^n$ to $\mathcal{C}$ by $\mathrm{dist}(x,\mathcal{C}):=\inf_{y\in\mathcal{C}}\|x-y\|$.

For an extended-real-valued function $h: \R^n\to [-\infty,\infty]$, we denote its domain by $\dom h = \left\{x\in \R^{n}:\;  h(x) < \infty \right\} $. The function $h$ is said to be proper if it never equals $-\infty$ and $\dom h\neq \emptyset$. Moreover, a proper function is closed if it is lower semicontinuous. A proper closed function $h$ is said to be level-bounded if the lower level sets of $h$ (i.e., $\left\{x\in\R^{n}:\;  h(x)\leq r \right\}$ for any $r\in \R$) are bounded. Given a proper closed function $h:{{\mathbb{R}}^{n}}\to \mathbb{R}\cup \{\infty\}$, the (limiting) subdifferential of $h$ at $x\in \dom h$ is given by
\begin{equation}\label{subdiffh}
\partial h(x) = \left\{v\in\R^n: \exists~ x^t\stackrel{h}{\rightarrow} x, v^t\rightarrow v \ {\rm with}\ \liminf\limits_{y \rightarrow x^t} \frac{h(y)-h(x^t)-\langle v^t, y-x^t \rangle}{\|y-x^t\|}\ge 0\ \ {\rm for~each~} t \right\},
\end{equation}
where $z\stackrel{h}{\rightarrow}x$ means $z\to x$ and $h(z)\to h(x)$. We also write ${\rm dom}\,\partial h:= \{x\in \mathbb{R}^n:\; \partial h(x)\neq \emptyset\}$.
It is known that the above subdifferential reduces to the classical subdifferential in convex analysis when $h$ is convex, i.e.,
\begin{equation*}
\partial h(x)=\left\{v\in{\mathbb{R}}^{n}:\;  h(u)-h(x)-\langle v, u-x \rangle \geq 0,\ \forall  u\in {\mathbb{R}}^{n}\right\};
\end{equation*}
see, for example, \cite[Proposition~8.12]{RR1998}. In addition, if $h$ is continuously differentiable, then the subdifferential \eqref{subdiffh} reduces to the gradient of $h$ denoted by $\nabla h$. We also use $\nabla_i h$ to denote the partial gradient of $h$ with respect to $x_i$, the $i$-th component of $x$.
%For more properties of the subdifferential, we refer the readers to \cite{M2006,RR1998}.

%We say that $\bar{x}$ is a stationary point of \eqref{P1} if it satisfies
%\begin{equation}\label{stationary}
%0 \in \nabla f(\bar{x}) + \partial P_1(\bar{x}) - \partial P_2(\bar{x}).
%\end{equation}
%In addition, we use $\mathcal{X}$ to denote the set of stationary points of $F$. It can be shown that any local minimizer of \eqref{P1} is a stationary point.

We next recall the Kurdyka-{\L}ojasiewicz (KL) property \cite{AB2009,ABRS2010,ABF2013,BDL2007}, which is satisfied by a wide variety of functions such as proper closed semialgebraic functions, and plays an important role in the convergence analysis of many first-order methods; see, for example, \cite{ABRS2010,ABF2013}.

\begin{definition}{\bf(KL property)}\label{KLF}
A proper closed function $h$ is said to satisfy the KL property at $\hat{x}\in \mathrm{dom}\, \partial h$ if there exist $a \in (0,\infty]$, a neighborhood $\mathcal{O}$ of $\hat{x}$, and a continuous concave function $\phi: [0,a)\rightarrow \R_{+}$ with $\phi(0)=0$ such that:
\begin{enumerate}[{\rm (i)}]
  \item $\phi$ is continuously differentiable on $(0,a)$ with $\phi'>0$;
  \item For any $x \in \mathcal{O}$ with $h(\hat{x})<h(x)<h(\hat{x})+a$, one has
\begin{equation}\label{KL_equation}
\phi'(h(x)-h(\hat{x}))\,\mathrm{dist}(0,\partial h(x))\ge 1.
\end{equation}
\end{enumerate}
A proper closed function $h$ satisfying the KL property at all points in ${\rm dom}\, \partial h$ is called a KL function.
\end{definition}
We also recall the following result proved in \cite[Lemma 6]{BST2014} concerning the uniformized KL property.
For notational simplicity, we use $\Xi_a$ to denote the set of all concave continuous functions $\phi: [0,a)\rightarrow \R_{+}$ that are continuously differentiable on $(0,a)$ with positive derivatives and satisfy $\phi(0)=0$.
\begin{lemma}{\bf(Uniformized KL property)}\label{UNKLF}
Suppose that $h$ is a proper closed function and let $\Gamma$ be a compact set. If $h$ is a constant on $\Gamma$ and satisfies the KL property at each point of $\Gamma$, then there exist $\epsilon, a>0$ and $\phi \in \Xi_a$ such that
\begin{equation*}
\phi'(h(x)-h(\hat{x}))\mathrm{dist}(0,\partial h(x))\ge 1
\end{equation*}
for any $\hat{x}\in \Gamma$ and any $x$ satisfying $\mathrm{dist}(x,\Gamma) < \epsilon$ and $h(\hat{x})<h(x)<h(\hat{x})+a$.
\end{lemma}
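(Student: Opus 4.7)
The plan is to reduce the uniform statement to the pointwise KL property via a compactness argument. For each $\hat{x}\in\Gamma$, the KL property at $\hat{x}$ supplies a radius $r_{\hat x}>0$, a constant $a_{\hat x}>0$, and a function $\phi_{\hat x}\in \Xi_{a_{\hat x}}$ such that the inequality
\[
\phi_{\hat x}'(h(x)-h(\hat x))\,\mathrm{dist}(0,\partial h(x))\ge 1
\]
holds whenever $\|x-\hat x\|<r_{\hat x}$ and $h(\hat x)<h(x)<h(\hat x)+a_{\hat x}$. The open balls $B(\hat x,r_{\hat x}/2)$ with $\hat x\in\Gamma$ cover the compact set $\Gamma$, so I can extract a finite subcover indexed by points $\hat x_1,\dots,\hat x_N$ with parameters $r_i,a_i,\phi_i$.

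Next I would choose $\epsilon>0$ and $a>0$ uniformly. Set $\epsilon:=\tfrac12\min_{1\le i\le N}r_i$ and $a:=\min_{1\le i\le N}a_i$. A short geometric check shows that any $x$ with $\mathrm{dist}(x,\Gamma)<\epsilon$ is within $r_i$ of some $\hat x_i$: indeed, some $\hat y\in\Gamma$ lies within $\epsilon$ of $x$, and $\hat y$ in turn lies in some $B(\hat x_i,r_i/2)$, so $\|x-\hat x_i\|<\epsilon+r_i/2\le r_i$. Then define the candidate desingularizing function to be $\phi:=\sum_{i=1}^{N}\phi_i$ on $[0,a)$. Since each $\phi_i$ is continuous, concave, $C^1$ on $(0,a)$ with $\phi_i'>0$, and vanishes at $0$, the sum $\phi$ also belongs to $\Xi_a$.

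Finally I would verify the claimed inequality. Pick any $\hat x\in\Gamma$ and any $x$ with $\mathrm{dist}(x,\Gamma)<\epsilon$ and $h(\hat x)<h(x)<h(\hat x)+a$. By the previous step there exists an index $i$ with $\|x-\hat x_i\|<r_i$. The constancy of $h$ on $\Gamma$ gives $h(\hat x_i)=h(\hat x)$, so the bracket condition at $\hat x_i$ is met: $h(\hat x_i)<h(x)<h(\hat x_i)+a\le h(\hat x_i)+a_i$. The pointwise KL property at $\hat x_i$ then yields
\[
\phi_i'(h(x)-h(\hat x))\,\mathrm{dist}(0,\partial h(x))\ge 1.
\]
Because every $\phi_j'$ is positive on $(0,a)$, we have $\phi'(t)=\sum_j \phi_j'(t)\ge \phi_i'(t)$ for $t\in(0,a)$, and substituting $t=h(x)-h(\hat x)$ delivers the uniform inequality.

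The main obstacle is really just the geometric selection of $\epsilon$ that ties together the finite subcover with arbitrary points in $\Gamma$; once that is handled, the rest is bookkeeping based on the constancy of $h$ on $\Gamma$ and the positivity of the derivatives $\phi_i'$. A minor subtlety is to insist on using balls (rather than arbitrary neighborhoods) when invoking the KL property, which is justified because any neighborhood contains a ball around $\hat x$, and the KL inequality continues to hold on smaller neighborhoods.
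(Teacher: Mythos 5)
Your proof is correct and is precisely the standard argument for this lemma: the paper itself gives no proof but cites \cite[Lemma 6]{BST2014}, and your compactness/finite-subcover construction with $\epsilon=\tfrac12\min_i r_i$, $a=\min_i a_i$ and the summed desingularizing function $\phi=\sum_i\phi_i$ is essentially the proof given in that reference. All the steps check out, including the geometric selection of $\epsilon$ and the use of the constancy of $h$ on $\Gamma$ to transfer the bracket condition to the covering point $\hat x_i$.
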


\section{Problem formulation and the proximal difference-of-convex algorithm with extrapolation}\label{sec3}

In this section, we describe the optimization problem we study in this paper and present our proximal difference-of-convex algorithm with extrapolation ($\rm pDCA_e$).

We focus on problems of the following form:
\begin{equation}\label{P1}
  v:= \min_{x\in \R^n} F(x) := \ f(x) + P(x),
\end{equation}
where $f$ is a smooth convex function with a Lipschitz continuous gradient whose Lipschitz continuity modulus is $L>0$, and
\[
P(x) = P_1(x) - P_2(x),
\]
with $P_1$ being a proper closed convex function and $P_2$ being a {\em continuous} convex function. We assume in addition that $F$ is level-bounded. This latter assumption implies that $v > -\infty$ and that the set of global minimizers of \eqref{P1} is nonempty. Problem~\eqref{P1} arises in applications such as compressed sensing, where $f$ is typically the data fitting term such as the least squares loss function, and $P$ is a nonsmooth regularizer for inducing desirable structures in the solution. We refer the readers to \cite{APX2016, BC2016, FL2001,YLHX2015,Z2010,ZX2016} for concrete examples.

It is clear that problem \eqref{P1} is a DC optimization problem and can be solved by the renowned DCA. However, as noted in the introduction, the difficulty of the subproblems involved in the DCA depends on the DC decomposition used. Indeed, when decomposing $F$ naturally as the difference of $f+P_1$ and $P_2$, the subproblems of the corresponding DCA take the following form:
\begin{equation}\label{DCAsub}
x^{t+1} \in \Argmin_{x\in \R^n}\left\{f(x) + P_1(x) - \langle\xi^t,x\rangle\right\},
\end{equation}
where $\xi^t\in \partial P_2(x^t)$. Although these problems are convex, they do not necessarily have closed form/simple solutions. On the other hand, simpler subproblems can be obtained via a specific DC decomposition described in \cite[Eq.~16]{PT1998} and many other related papers such as \cite{GTT2015}, i.e.,
\[
F(x) = \left(\frac{L}2\|x\|^2 + P_1(x)\right) - \left(\frac{L}2\|x\|^2-f(x)+P_2(x)\right),
\]
and we refer to the resulting DCA as the proximal DCA. When applied to solving \eqref{P1}, the subproblems of the proximal DCA take the following form:
\begin{equation}\label{pDCAsub}
\begin{split}
x^{t+1} &= \argmin_{x\in \R^n}\left\{\langle\nabla f(x^t)-\xi^t,x\rangle+\frac{L}2\|x-x^t\|^2 + P_1(x)\right\}\\
& = \argmin_{x\in \R^n}\left\{\frac{L}2\left\|x-\left(x^t - \frac1L[\nabla f(x^t)-\xi^t]\right)\right\|^2 + P_1(x)\right\},
\end{split}
\end{equation}
where $\xi^t\in \partial P_2(x^t)$, and $x^{t+1}$ is uniquely defined because $P_1$ is proper closed convex. In contrast to \eqref{DCAsub}, solving the subproblem \eqref{pDCAsub} amounts to evaluating the so-called proximal operator of $\frac1L P_1$, and this proximal operator is easy to compute for a wide variety of $P_1$; see, for example, \cite[Tables 10.1 and 10.2]{CP2011}.

Despite having simple subproblems for many commonly used $P_1$, the proximal DCA is potentially slow: this is because the proximal DCA is the same as the proximal gradient algorithm when $P_2=0$ and the proximal gradient algorithm can take a lot of iterations in practice \cite[Section~5]{DC2015}. Fortunately, the proximal gradient algorithm for convex problems (i.e., when $P_2=0$) has been successfully accelerated by various extrapolation techniques \cite{N1983,N2004,N2007,N20071}. Thus, it is tempting to incorporate extrapolation techniques into the proximal DCA to possibly accelerate the algorithm. Specifically, we consider the following algorithm for solving the DC optimization problem \eqref{P1}:

\begin{center}
\fbox{\parbox{5in}{\vspace{1mm}
~\textbf{Proximal difference-of-convex algorithm with extrapolation (${\bf pDCA}_e$)}:
\begin{description}
\item{\textbf{Input}:} $x^{0}\in {\rm dom}\, P_1$, $\{\beta_{t}\}\subseteq [0,1)$ with $\sup\limits_t \beta_t < 1$. Set $x^{-1}=x^{0}$.

\item \hspace{5mm} ~\textbf{for} $t=0,1,2,\cdots$ \\
\phantom{AAAAA;}Take any $\xi^t \in \partial P_2(x^t)$ and set\vspace{-1mm}
\begin{equation}\label{iteratePG}
\begin{split}
&y^{t}=x^{t}+\beta_{t}(x^{t}-x^{t-1}),\\
&x^{t+1}=\argmin_{y\in \R^n}\left\{\langle \nabla f(y^t) - \xi^t,y\rangle + \frac{L}2\|y - y^t\|^2 + P_1(y)\right\}.\\
%&k = k + 1 . \nonumber
\end{split}
\end{equation}
\item \vspace{0mm}\hspace{5mm} \textbf{end for}
\end{description}}}
\end{center}

%{\color{red}
%1. When $\beta_t\equiv0$, this is just pDCA.
%2. can cover the choice as in FISTA / FISTA with restart, etc.
%3. Study convergence in the next section.
%}

In view of the algorithmic framework of ${\rm pDCA}_e$ and the subproblem \eqref{pDCAsub} in the proximal DCA, it is not hard to see that ${\rm pDCA}_e$ reduces to the proximal DCA when $\beta_t \equiv 0$. Hence, the proximal DCA is a special case of ${\rm pDCA}_e$. In addition, we would like to point out that the conditions on $\{\beta_t\}$ in ${\rm pDCA}_e$ (i.e., $\{\beta_{t}\}\subseteq [0,1)$ and $\sup\limits_t \beta_t < 1$) are general enough to cover many popular choices of extrapolation parameters including those used in FISTA with fixed restart or FISTA with both fixed and adaptive restart for solving \eqref{P1} with $P_2=0$ \cite{DC2015}.
In detail, in these schemes, one starts with $\theta_{-1}=\theta_0=1$, recursively defines for $t\ge 0$ that
\begin{equation}\label{fista_extra}
\beta_{t}=\frac{\theta_{t-1}-1}{\theta_{t}}\ \ {\rm with}\ \ \theta_{t+1}=\frac{1+\sqrt{1+4\theta_{t}^{2}}}{2},
\end{equation}
and resets $\theta_{t-1} = \theta_{t} = 1$ for some $t > 0$ under suitable conditions: in the fixed restart scheme, one fixes a positive number $\bar{T}$ and resets $\theta_{t-1} = \theta_{t} = 1$ every $\bar{T}$ iterations, while the adaptive restart scheme amounts to resetting $\theta_{t-1} = \theta_{t} = 1$ whenever $\langle y^{t-1}-x^{t},x^{t}-x^{t-1}\rangle >0$.
From these definitions, one can readily show by induction that the $\{\beta_t\}$ chosen as in FISTA with fixed restart or FISTA with both fixed and adaptive restart satisfies $\{\beta_{t}\}\subseteq [0,1)$ and $\sup\limits_t \beta_t < 1$.\footnote{Indeed, when $P_2 = 0$, FISTA with fixed restart and FISTA with both fixed and adaptive restart are special cases of ${\rm pDCA}_e$.} The choice of $\{\beta_t\}$ as in FISTA with both fixed and adaptive restart will be used in our numerical experiments in Section~\ref{sec5}.

\section{Convergence analysis}\label{sec4}
In this section, we study the convergence behavior of ${\rm pDCA}_e$. We first establish the global subsequential convergence of ${\rm pDCA}_e$. Then, by making an additional differentiability assumption on $P_2$ and assuming that the Kurdyka-{\L}ojasiewicz property holds for an auxiliary function, we prove the global convergence of the whole sequence generated by ${\rm pDCA}_e$ and analyze the rate of convergence.

\subsection{Convergence analysis I: Global subsequential convergence of ${\rm pDCA}_e$}

We start with the following definition of stationary points; see, for example, \cite[Remark 1]{GZLHY2013}. It is routine to show that any local minimizer of $F$ is a stationary point of $F$; see \cite[Theorem 2(i)]{PT1997}.
\begin{definition}
  Let $F$ be given in \eqref{P1}. We say that $\bar x$ is a stationary point of $F$ if
  \begin{equation*}
   0 \in \nabla f(\bar{x}) + \partial P_1(\bar{x}) - \partial P_2(\bar{x}).
  \end{equation*}
  The set of all stationary points of $F$ is denoted by ${\cal X}$.
\end{definition}

We are now ready to prove a global subsequential convergence result for ${\rm pDCA}_e$ applied to solving \eqref{P1}. Recall that $F$ in \eqref{P1} is level-bounded, and the extrapolation parameters $\{\beta_t\}$ in ${\rm pDCA}_e$ satisfy $\sup\limits_t\beta_t < 1$ and $\{\beta_t\}\subseteq [0,1)$.

\begin{theorem}{\bf (Global subsequential convergence of ${\rm pDCA}_e$)}\label{thm1}
Let $\{x^t\}$ be a sequence generated by ${\rm pDCA}_e$ for solving \eqref{P1}. Then the following statements hold.
\begin{enumerate}[{\rm (i)}]
  \item The sequence $\{x^t\}$ is bounded.
  \item $\lim_{t\to \infty}\|x^{t+1} - x^t\|= 0$.
  \item Any accumulation point of $\{x^t\}$ is a stationary point of $F$.
\end{enumerate}
\end{theorem}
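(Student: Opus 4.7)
The plan is to prove all three parts from a single sufficient-decrease inequality driven by a Lyapunov function that compensates for the nonmonotonicity of $F(x^t)$ caused by extrapolation. The main obstacle is finding the right Lyapunov correction: $F(x^t)$ alone need not decrease, so one has to add a multiple of $\|x^t - x^{t-1}\|^2$ that is large enough to absorb the extrapolation-induced error term arising from $\|y^t - x^t\|^2$, yet small enough to leave a strictly negative coefficient on $\|x^{t+1} - x^t\|^2$. Once this balance is identified, the rest of the argument is essentially mechanical.

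First I would derive a one-step bound of the form
\[
F(x^{t+1}) \;\le\; F(x^t) + \tfrac{L\beta_t^2}{2}\|x^t - x^{t-1}\|^2 - \tfrac{L}{2}\|x^{t+1} - x^t\|^2,
\]
by combining four ingredients: the descent lemma for the $L$-smooth $f$ between $y^t$ and $x^{t+1}$; the convexity of $f$ at $y^t$ (which turns $f(y^t) + \langle \nabla f(y^t), x^t - y^t\rangle$ into a lower bound for $f(x^t)$); the $L$-strong convexity of the subproblem objective compared at its minimizer $x^{t+1}$ and at the feasible point $x^t$; and the subgradient inequality $\langle \xi^t, x^{t+1} - x^t\rangle \le P_2(x^{t+1}) - P_2(x^t)$ coming from $\xi^t \in \partial P_2(x^t)$. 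The terms involving $y^t$ cancel neatly, and $\|y^t - x^t\|^2 = \beta_t^2 \|x^t - x^{t-1}\|^2$ delivers the stated inequality.

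Setting $\beta := \sup_t \beta_t \in [0,1)$ and defining $H_t := F(x^t) + \tfrac{L\beta^2}{2}\|x^t - x^{t-1}\|^2$, the one-step bound rearranges to
\[
H_{t+1} - H_t \;\le\; -\tfrac{L(1-\beta^2)}{2}\|x^{t+1} - x^t\|^2 \;\le\; 0,
\]
so $\{H_t\}$ is nonincreasing. Because $x^{-1} = x^0$, this gives $F(x^t) \le H_t \le H_0 = F(x^0)$, and the level-boundedness of $F$ immediately yields (i). Since $F$ is a level-bounded proper closed function, its infimum $v$ is finite, so $\{H_t\}$ is bounded below; telescoping the descent inequality then produces $\sum_t \|x^{t+1} - x^t\|^2 < \infty$, which gives (ii).

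For (iii), let $\bar x$ be an accumulation point of $\{x^t\}$ along a subsequence $x^{t_j} \to \bar x$. Part (ii) forces $x^{t_j+1} \to \bar x$ and $y^{t_j} \to \bar x$. Boundedness of $\{x^t\}$ together with the local boundedness of $\partial P_2$ (valid since $P_2$ is finite-valued convex, hence locally Lipschitz) lets me pass to a further subsequence along which $\xi^{t_j} \to \bar\xi$, with $\bar\xi \in \partial P_2(\bar x)$ by the closed-graph property of subdifferentials of proper lower semicontinuous convex functions. Writing the first-order optimality condition for the subproblem as
\[
\xi^{t} - \nabla f(y^{t}) - L(x^{t+1} - y^{t}) \;\in\; \partial P_1(x^{t+1}),
\]
the left-hand side converges along the subsequence to $\bar\xi - \nabla f(\bar x)$, and the same closed-graph argument applied to $\partial P_1$ yields $\bar\xi - \nabla f(\bar x) \in \partial P_1(\bar x)$. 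Rearranging gives $0 \in \nabla f(\bar x) + \partial P_1(\bar x) - \partial P_2(\bar x)$, i.e., $\bar x \in {\cal X}$.
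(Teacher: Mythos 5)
Your proposal is correct and follows essentially the same route as the paper: the same four ingredients (descent lemma, convexity of $f$, strong convexity of the subproblem, and the subgradient inequality for $P_2$) yield the same one-step bound, a Lyapunov function of the form $F(x^t)+c\|x^t-x^{t-1}\|^2$ gives monotonicity and summability, and part (iii) is the identical closed-graph argument. The only (immaterial) difference is your choice of coefficient $\tfrac{L\beta^2}{2}$ in the correction term where the paper uses $\tfrac{L}{2}$, which shifts the index on the resulting decrease term but changes nothing of substance.
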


\begin{proof}
  First we prove (i). We note from \eqref{iteratePG} that $x^{t+1}$ is the global minimizer of a strongly convex function. Using this and comparing the objective values of this strongly convex function at $x^{t+1}$ and $x^t$, we see immediately that
  \begin{equation}\label{fineq}
  \begin{split}
    &\langle \nabla f(y^t) - \xi^t,x^{t+1}\rangle + \frac{L}2\|x^{t+1} - y^t\|^2 + P_1(x^{t+1})\\
    &\le \langle \nabla f(y^t) - \xi^t,x^t\rangle + \frac{L}2\|x^t - y^t\|^2 + P_1(x^t) - \frac{L}2\|x^{t+1} - x^t\|^2.
  \end{split}
  \end{equation}
  On the other hand, using the fact that $\nabla f$ is Lipschitz continuous with a modulus of $L > 0$, we have
  \begin{equation}\label{eq1}
    \begin{split}
      &f(x^{t+1}) + P(x^{t+1})  \le f(y^t) + \langle \nabla f(y^t),x^{t+1} - y^t\rangle + \frac{L}2\|x^{t+1} - y^t\|^2 + P(x^{t+1})\\
      & = f(y^t) + \langle \nabla f(y^t),x^{t+1} - y^t\rangle + \frac{L}2\|x^{t+1} - y^t\|^2 + P_1(x^{t+1}) - P_2(x^{t+1})\\
      & \le f(y^t) + \langle \nabla f(y^t),x^{t+1} - y^t\rangle + \frac{L}2\|x^{t+1} - y^t\|^2 + P_1(x^{t+1}) - P_2(x^t) - \langle \xi^t,x^{t+1} - x^t\rangle\\
      &\le f(y^t) + \langle \nabla f(y^t),x^{t} - y^t\rangle + \frac{L}2\|x^t - y^t\|^2 + P_1(x^t) - P_2(x^t) - \frac{L}2\|x^{t+1} - x^t\|^2\\
      &\le f(x^t) + P(x^t) + \frac{L}2\|x^t - y^t\|^2 - \frac{L}2\|x^{t+1} - x^t\|^2,
    \end{split}
  \end{equation}
  where the second inequality follows from the subgradient inequality and the fact that $\xi^t\in \partial P_2(x^t)$, the third inequality follows from \eqref{fineq}, while the last inequality follows from the convexity of $f$ and the definition of $P$. Now, invoking the definition of $y^t$, we obtain further from \eqref{eq1} that
  \[
  f(x^{t+1}) + P(x^{t+1}) \le f(x^t) + P(x^t) + \frac{L}{2}\beta_t^2\|x^t - x^{t-1}\|^2 - \frac{L}2\|x^{t+1} - x^t\|^2.
  \]
  Consequently, we have upon rearranging terms that
  \begin{equation}\label{rel1}
    \frac{L}{2}(1-\beta_t^2)\|x^t - x^{t-1}\|^2 \le \left[f(x^t) + P(x^t) + \frac{L}2\|x^t - x^{t-1}\|^2\right] - \left[f(x^{t+1}) + P(x^{t+1}) + \frac{L}2\|x^{t+1} - x^t\|^2\right].
  \end{equation}

  Since $\{\beta_t\}\subset [0,1)$, we deduce from \eqref{rel1} that the sequence $\{f(x^t) + P(x^t) + \frac{L}2\|x^t - x^{t-1}\|^2\}$ is nonincreasing. This together with the fact that $x^0 = x^{-1}$ gives
  \[
  f(x^t) + P(x^t) \le f(x^t) + P(x^t) + \frac{L}2\|x^t - x^{t-1}\|^2 \le f(x^0) + P(x^0)
  \]
  for all $t\ge 0$, which shows that $\{x^t\}$ is bounded, thanks to the level-boundedness of $f+P$. This proves (i).

  Next we prove (ii). Summing both sides of \eqref{rel1} from $t=0$ to $\infty$, we obtain that
  \[
  \begin{split}
  \frac{L}{2}\sum_{t=0}^\infty(1-\beta_t^2)\|x^t - x^{t-1}\|^2 &\le f(x^0) + P(x^0) - \liminf_{t\to \infty}\left[f(x^{t+1}) + P(x^{t+1}) + \frac{L}2\|x^{t+1} - x^t\|^2\right]\\
  & \le f(x^0) + P(x^0) - v < \infty.
  \end{split}
  \]
  Since $\sup\limits_t\beta_t < 1$, we deduce immediately from the above relation that $\lim\limits_{t\to \infty}\|x^{t+1} - x^t\|= 0$. This proves (ii).

  Finally, let $\bar{x}$ be an accumulation point of $\{x^t\}$ and let $\{x^{t_{i}}\}$ be a subsequence such that $\lim\limits_{i\to\infty}x^{t_{i}}= \bar{x}$. Then, from the first-order optimality condition of the subproblem \eqref{iteratePG}, we have
\begin{equation*}
-L(x^{t_{i}+1}-y^{t_{i}})\in \partial P_1(x^{t_{i}+1})+\nabla f(y^{t_{i}})-\xi^{t_i}.
\end{equation*}
Using this together with the fact that $y^{t_{i}}=x^{t_{i}}+\beta_{t_{i}}(x^{t_{i}}-x^{t_{i}-1})$, we obtain further that
\begin{equation}\label{Optimali1}
-L[(x^{t_{i}+1}-x^{t_{i}})-\beta_{t_{i}}(x^{t_{i}}-x^{t_{i}-1})]\in \partial P_1(x^{t_{i}+1})+\nabla f(y^{t_{i}})-\xi^{t_i}.
\end{equation}
In addition, note that the sequence $\{\xi^{t_i}\}$ is bounded due to the continuity and convexity of $P_2$ and the boundedness of $\{x^{t_i}\}$. Thus, by passing to a further subsequence if necessary, we may assume without loss of generality that $\lim\limits_{i\to \infty}\xi^{t_i}$ exists, which belongs to $\partial P_2(\bar x)$ due to the closedness of $\partial P_2$.
Using this and invoking $\|x^{t_{i}+1}-x^{t_{i}}\|\to 0$ from (ii) together with the closedness of $\partial P_1$ and the continuity of $\nabla f$, we have upon passing to the limit in \eqref{Optimali1} that
\begin{equation*}
0\in \partial P_1(\bar{x})+\nabla f(\bar{x})-\partial P_2(\bar{x}).
\end{equation*}
This completes the proof.
\end{proof}

We next study the behavior of $\{F(x^t)\}$ for a sequence $\{x^t\}$ generated by ${\rm pDCA}_e$. The result will subsequently be used in establishing global convergence of the whole sequence $\{x^t\}$ under additional assumptions in the next subsection.

\begin{proposition}\label{Fconstant}
Let $\{x^{t}\}$ be a sequence generated by ${\rm pDCA}_e$ for solving \eqref{P1}. Then the following statements hold.
\begin{enumerate}[{\rm (i)}]
  \item $\zeta:=\lim\limits_{t\to\infty}F(x^t)$ exists.
  \item $F\equiv \zeta$ on $\Omega$, where $\Omega$ is the set of accumulation points of $\{x^{t}\}$.
\end{enumerate}
\end{proposition}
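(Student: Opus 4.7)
The plan is to first observe that the key descent-type estimate \eqref{rel1} from the proof of Theorem~\ref{thm1} immediately gives (i). Writing $H_t := F(x^t) + \frac{L}{2}\|x^t - x^{t-1}\|^2$, inequality \eqref{rel1} says $H_{t+1} \le H_t - \frac{L}{2}(1-\beta_t^2)\|x^t - x^{t-1}\|^2 \le H_t$, so $\{H_t\}$ is nonincreasing; it is also bounded below by $v > -\infty$, hence converges to some $\zeta \in \R$. By Theorem~\ref{thm1}(ii), $\|x^t - x^{t-1}\| \to 0$, so the extra quadratic term vanishes in the limit and $F(x^t) \to \zeta$.

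For (ii), the challenge is that $F = f + P_1 - P_2$ need not be continuous because $P_1$ is only lower semicontinuous. I would fix $\bar x \in \Omega$ and a subsequence $x^{t_i} \to \bar x$ and aim to show $P_1(x^{t_i}) \to P_1(\bar x)$; continuity of $f$ and $P_2$ would then give $F(x^{t_i}) \to F(\bar x)$, and combined with (i) this yields $F(\bar x) = \zeta$. The crucial point is to upgrade the automatic bound $\liminf_i P_1(x^{t_i}) \ge P_1(\bar x)$ (from lower semicontinuity) to a matching $\limsup$ inequality.

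To obtain $\limsup_i P_1(x^{t_i}) \le P_1(\bar x)$, I would exploit the defining optimality of $x^{t_i}$: since $x^{t_i}$ is the minimizer of the strongly convex subproblem in \eqref{iteratePG} with index $t_i - 1$, comparing its value at $x^{t_i}$ and at the feasible point $\bar x$ gives
\begin{equation*}
\langle \nabla f(y^{t_i-1}) - \xi^{t_i-1}, x^{t_i}\rangle + \tfrac{L}{2}\|x^{t_i} - y^{t_i-1}\|^2 + P_1(x^{t_i}) \le \langle \nabla f(y^{t_i-1}) - \xi^{t_i-1}, \bar x\rangle + \tfrac{L}{2}\|\bar x - y^{t_i-1}\|^2 + P_1(\bar x).
\end{equation*}
Now by Theorem~\ref{thm1}(ii) we have $\|x^{t_i} - x^{t_i-1}\| \to 0$ and $\|x^{t_i-1} - x^{t_i-2}\| \to 0$, so $x^{t_i-1} \to \bar x$ and $y^{t_i-1} = x^{t_i-1} + \beta_{t_i-1}(x^{t_i-1} - x^{t_i-2}) \to \bar x$ along this subsequence (using $\beta_t \in [0,1)$). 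Continuity of $\nabla f$, boundedness of $\{\xi^{t_i-1}\}$ (from continuity and convexity of $P_2$ together with boundedness of $\{x^{t_i-1}\}$), and $x^{t_i} \to \bar x$ then make the linear and quadratic terms on both sides either cancel in the limit or tend to zero. Taking $\limsup$ therefore yields $\limsup_i P_1(x^{t_i}) \le P_1(\bar x)$, as desired.

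The main obstacle is precisely this continuity-along-the-subsequence step: without the subproblem optimality, lower semicontinuity of $P_1$ alone is insufficient, and one needs to combine it with the algorithmic structure. Once $P_1(x^{t_i}) \to P_1(\bar x)$ is established, (ii) follows by noting $F(x^{t_i}) = f(x^{t_i}) + P_1(x^{t_i}) - P_2(x^{t_i}) \to f(\bar x) + P_1(\bar x) - P_2(\bar x) = F(\bar x)$ and invoking (i).
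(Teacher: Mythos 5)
Your proposal is correct and follows essentially the same route as the paper: part (i) via the monotonicity of $F(x^t)+\frac{L}{2}\|x^t-x^{t-1}\|^2$ from \eqref{rel1} together with $\|x^t-x^{t-1}\|\to 0$, and part (ii) by comparing the subproblem objective at $x^{t_i}$ and at the accumulation point to obtain the upper estimate, then pairing it with lower semicontinuity. The only cosmetic difference is that you isolate the conclusion as $P_1(x^{t_i})\to P_1(\bar x)$ while the paper takes the $\limsup$ of the whole inequality to get $\zeta\le F(\hat x)$ directly; the substance is identical.
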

\begin{proof}
Since $\{\beta_t\}\subseteq [0,1)$, we see immediately from \eqref{rel1} that the sequence $\{F(x^t)+ \frac{L}2\|x^t - x^{t-1}\|^2\}$ is nonincreasing. In addition, this sequence is also bounded below by $v$. Furthermore, we recall from Theorem~\ref{thm1}(ii) that $\|x^{t+1}-x^{t}\| \rightarrow 0$. The conclusion that $\zeta:=\lim\limits_{t\rightarrow\infty}F(x^{t})$ exists now follows immediately from the aforementioned facts. This proves (i).

Now we prove (ii). We first note from Theorem~\ref{thm1}(i) and (iii) that $\emptyset \neq \Omega \subseteq\mathcal{X}$. Take any $\hat{x}\in \Omega$. By the definition of accumulation point, there exists a convergent subsequence $\{x^{t_{i}}\}$ such that $\lim\limits_{i\rightarrow\infty}x^{t_{i}}= \hat{x}$. Since $x^{t_i}$ is the minimizer of the subproblem \eqref{iteratePG}, we see that
\begin{equation*}
P_1(x^{t_{i}})+\langle \nabla f(y^{t_{i}-1})-\xi^{t_{i}-1}, x^{t_{i}}\rangle +\frac{L}{2}\|x^{t_{i}}-y^{t_{i}-1}\|^{2} \leq P_1(\hat{x})+\langle \nabla f(y^{t_{i}-1})-\xi^{t_{i}-1}, \hat{x}\rangle + \frac{L}{2}\|\hat{x}-y^{t_{i}-1}\|^{2}.
\end{equation*}
Rearranging terms, we obtain further that
\begin{equation}\label{uppersup}
P_1(x^{t_{i}})+\langle \nabla f(y^{t_{i}-1})-\xi^{t_{i}-1}, x^{t_{i}}-\hat{x}\rangle +\frac{L}{2}\|x^{t_{i}}-y^{t_{i}-1}\|^{2} \leq P_1(\hat{x})+\frac{L}{2}\|\hat{x}-y^{t_{i}-1}\|^{2}.
\end{equation}
On the other hand, observe that
\begin{equation}\label{separate2}
\|\hat x-y^{t_{i}-1}\| = \|\hat x-x^{t_{i}}+x^{t_{i}}-y^{t_{i}-1}\|\leq \|\hat x-x^{t_{i}}\|+\|x^{t_{i}}-y^{t_{i}-1}\|
\end{equation}
and that
\begin{equation}\label{separate1}
\begin{split}
\|x^{t_{i}}-y^{t_{i}-1}\| &= \|x^{t_{i}}-x^{t_{i}-1}-\beta_{t_{i}-1}(x^{t_{i}-1}-x^{t_{i}-2})\|\\
&\leq \|x^{t_{i}}-x^{t_{i}-1}\|+\|x^{t_{i}-1}-x^{t_{i}-2}\|,
\end{split}
\end{equation}
where we made use of the fact that $y^{t_{i}-1}=x^{t_{i}-1}+\beta_{t_{i}-1}(x^{t_{i}-1}-x^{t_{i}-2})$ for the equality.
Since $\|x^{t+1}-x^{t}\|\rightarrow 0$ from Theorem~\ref{thm1}(ii) and $\lim\limits_{i\rightarrow\infty}x^{t_{i}}= \hat{x}$, we have by passing to the limits in \eqref{separate2} and \eqref{separate1} that
\begin{equation}\label{separate3}
\|\hat x-y^{t_{i}-1}\|\to 0\ \ {\rm and}\ \
\|x^{t_{i}}-y^{t_{i}-1}\| \rightarrow 0.
\end{equation}
In addition, notice that the sequence $\{\xi^{t_i}\}$ is bounded, thanks to the convexity and continuity of $P_2$ and the fact that $\lim\limits_{i\rightarrow\infty}x^{t_{i}}= \hat{x}$. Using this and \eqref{separate3}, we obtain further that
\[
\begin{split}
  \zeta &=\lim\limits_{i\rightarrow\infty} f(x^{t_{i}})+P(x^{t_{i}})\\
  &=\lim\limits_{i\to \infty} f(x^{t_{i}}) + P(x^{t_{i}})+\langle \nabla f(y^{t_{i}-1})-\xi^{t_{i}-1}, x^{t_{i}}-\hat{x}\rangle +\frac{L}{2}\|x^{t_{i}}-y^{t_{i}-1}\|^{2} \\
  &\leq \limsup\limits_{i\to \infty} f(x^{t_{i}})+P_1(\hat{x})-P_2(x^{t_{i}})+\frac{L}{2}\|\hat{x}-y^{t_{i}-1}\|^{2} = F(\hat x),
\end{split}
\]
where the inequality follows from \eqref{uppersup} and the definition of $P$.
Finally, since $F$ is lower semicontinuous, we also have
\begin{equation*}
\begin{split}
F(\hat{x}) &\leq \liminf\limits_{i\rightarrow\infty}F(x^{t_{i}})=\lim\limits_{i\rightarrow\infty}F(x^{t_{i}})=\zeta.
\end{split}
\end{equation*}
Consequently, $F(\hat{x})=\lim\limits_{i\rightarrow\infty} F(x^{t_{i}})= \zeta$. Since $\hat{x}\in \Omega$ is arbitrary, we conclude that $F\equiv \zeta$ on $\Omega$. This completes the proof.
\end{proof}

\subsection{Convergence analysis II: Global convergence and convergence rate of the ${\rm pDCA}_e$}

In this subsection, we consider the global convergence property of the whole sequence $\{x^t\}$ generated by ${\rm pDCA}_e$ for solving \eqref{P1} and establish the convergence rate of $\{x^t\}$ under suitable conditions. We start by introducing the following assumption.
\begin{assumption}\label{assum1}
  The function $P_2$ in \eqref{P1} is continuously differentiable on an open set $\mathcal{N}_0$ that contains $\cal X$. Moreover, the gradient $\nabla P_2$ is locally Lipschitz continuous on ${\cal N}_0$.
\end{assumption}

While Assumption~\ref{assum1} may look restrictive at first glance, it is satisfied by many DC regularizers $P(x)$ that arise in applications. We present some concrete examples below.

\begin{example}\label{lsl1l2}
We consider the least squares problem with $\ell_{1-2}$ regularization \cite{YLHX2015}, which takes the following form
\begin{equation}\label{l1l2}
\min_{x\in \R^n} F_{\ell_{1-2}}(x) = \ \frac{1}{2}\|Ax-b\|^2 + \lambda\|x\|_1 - \lambda\|x\|,
\end{equation}
where $A\in {\mathbb{R}}^{m\times n}$, $b\in {\mathbb{R}}^{m}$ and $\lambda > 0$. We also assume that $A$ does not have zero columns so that $F_{\ell_{1-2}}$ is level-bounded (see \cite[Lemma~3.1]{YLHX2015} and \cite[Example~4.1(b)]{LiuP2016}). This model corresponds to \eqref{P1} with $f(x)=\frac{1}{2}\|Ax-b\|^2$, $P_1(x)= \lambda\|x\|_1$ and $P_2(x)=\lambda\|x\|$.

We claim that if $2\lambda < \|A^{T}b\|_{\infty}$, then $0$ is not a stationary point of $F_{\ell_{1-2}}$. Suppose to the contrary that $0\in \mathcal{X}$, then we have from the definition of stationary point that $A^{T}b \in \lambda\partial\|0\|_1 - \lambda\partial\|0\|$, which is equivalent to
\begin{equation*}
A^{T}b \in \lambda[-1,1]^n - \lambda B(0,1),
\end{equation*}
where $B(0,1)=\{x\in \R^n: \|x\|\le 1\}$. From this, we see that $\|A^{T}b\|_{\infty} \le 2\lambda$, which is a contradiction.

Hence, if $\lambda < \frac12\|A^{T}b\|_{\infty}$, then $\cal X$ does not contain $0$. Since $\cal X$ is closed, one can then construct an open set ${\cal N}_0$ containing $\cal X$ so that $P_2$ is continuously differentiable with locally Lipschitz gradient on ${\cal N}_0$. Thus, Assumption~\ref{assum1} is satisfied for \eqref{l1l2} when $\lambda < \frac12\|A^{T}b\|_{\infty}$.
\end{example}

\begin{example}\label{example:mcp}
We consider the minmax concave penalty (MCP) regularization \cite{Z2010}, whose DC decomposition is given in \cite{GZLHY2013}:
\[
P(x) = \lambda\sum_{i=1}^{n}\int_{0}^{|x_i|}\left[1-\frac{x}{\theta\lambda}\right]_{+}dx
= \lambda\|x\|_1 - \underbrace{\lambda\sum_{i=1}^{n} \int_{0}^{|x_i|}\min\left\{1,\frac{x}{\theta\lambda}\right\}dx}_{P_2(x)},
\]
where $\theta>0$ is a constant, $\lambda>0$ is the regularization parameter and $[x]_{+} = \max\{0,x\}$. It is routine to show that $P_2$ is continuously differentiable and
\begin{equation*}
\nabla_i P_2(x) = \lambda\,\mathrm{sign}(x_i)\min\{1,|x_i|/(\theta\lambda)\}.
\end{equation*}
Moreover, the gradient $\nabla P_2$ is Lipschitz continuous with modulus $\frac1\theta$.
\end{example}

\begin{example}\label{example3}
We consider the smoothly clipped absolute deviation (SCAD) regularization \cite{FL2001}, whose DC decomposition is given in \cite{GZLHY2013}:
\[
P(x) = \lambda\sum_{i=1}^{n}\int_{0}^{|x_i|}\min\left\{1, \frac{[\theta\lambda - x]_{+}}{(\theta-1)\lambda}\right\}dx = \lambda\|x\|_1 - \underbrace{\lambda\sum_{i=1}^{n} \int_{0}^{|x_i|}\frac{[\min\{\theta\lambda,x\}-\lambda]_+}{(\theta-1)\lambda}dx}_{P_2(x)},
\]
where $\lambda>0$ is the regularization parameter and $\theta>2$ is a constant. It is routine to show that $P_2$ is continuously differentiable with
\begin{equation*}
\nabla_i P_2(x) = \mathrm{sign}(x_i)\frac{[\min\{\theta\lambda,|x_i|\}-\lambda]_{+}}{\theta-1}.
\end{equation*}
Thus it is routine to show that $\frac1{\theta-1}$ is a Lipschitz continuity modulus of $\nabla P_2$.
\end{example}

\begin{example}\label{example4}
We consider the transformed $\ell_{1}$ regularization \cite{ZX2016}, whose DC decomposition is given in \cite{APX2016}:
\begin{equation*}
P(x) = \sum_{i=1}^{n}\frac{(a+1)|x_i|}{a+|x_i|} = \frac{a+1}{a}\|x\|_1 - \underbrace{ \sum_{i=1}^{n}\left[\frac{a+1}{a}|x_i|- \frac{(a+1)|x_i|}{a+|x_i|}\right]}_{P_2(x)},
\end{equation*}
where $a>0$. It was shown in \cite[Section~5.4]{APX2016} that $P_2(x)$ is continuously differentiable with a Lipschitz continuous gradient whose Lipschitz continuity modulus is $\frac{2(a+1)}{a^2}$.
\end{example}

\begin{example}\label{log}
The last regularization function we consider is the logarithmic penalty function \cite{CWB2008}, whose DC decomposition is given in \cite{GZLHY2013}:
\begin{equation*}
P(x) = \sum_{i=1}^{n}\left[\lambda\log (|x_i|+\epsilon) - \lambda\log \epsilon\right] = \frac{\lambda}{\epsilon}\|x\|_1 - \underbrace{ \sum_{i=1}^n \lambda\left[\frac{|x_i|}{\epsilon} - \log(|x_i|+\epsilon) + \log\epsilon\right]}_{P_2(x)},
\end{equation*}
where $\lambda$ and $\epsilon$ are positive numbers. One can see that $P_2(x)$ is continuously differentiable with a Lipschitz continuous gradient whose Lipschitz continuity modulus is $\frac{\lambda}{\epsilon^2}$.
\end{example}

We next present our global convergence analysis. We will show that the sequence $\{x^t\}$ generated by ${\rm pDCA}_e$ is convergent to a stationary point of $F$ under suitable assumptions. Our analysis follows a similar line of arguments to other convergence analysis based on KL property (see, for example, \cite{AB2009,ABRS2010,ABF2013,BB2016}), but has to make extensive use of the following auxiliary function:
\begin{equation}\label{Edef}
E(x, y)= f(x) + P(x) + \frac{L}2\|x - y\|^2.
\end{equation}

\begin{theorem}{\bf (Global convergence of ${\rm pDCA}_e$)}\label{seq_converge}
Suppose that Assumption~\ref{assum1} holds and $E$ is a KL function. Let $\{x^{t}\}$ be a sequence generated by ${\rm pDCA}_e$ for solving \eqref{P1}. Then the following statements hold.
\begin{enumerate}[{\rm (i)}]
\item $\lim\limits_{t\rightarrow \infty}\mathrm{dist}((0,0),\partial E(x^t, x^{t-1}))=0$.
\item The sequence $\{E(x^t,x^{t-1})\}$ is nonincreasing and $\lim\limits_{t\rightarrow \infty} E(x^t,x^{t-1})=\zeta$, where $\zeta$ is given in Proposition~\ref{Fconstant}.
\item The set of accumulation points of $\{(x^t,x^{t-1})\}$ is $\Upsilon := \{(x,x):\; x\in \Omega\}$ and $E\equiv \zeta$ on $\Upsilon$, where $\Omega$ is the set of accumulation points of $\{x^t\}$.
\item The sequence $\{x^t\}$ converges to a stationary point of $F$; moreover, $\sum_{t=1}^{\infty}\|x^t-x^{t-1}\|< \infty$.
\end{enumerate}
\end{theorem}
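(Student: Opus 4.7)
The plan is to prove the four parts in order, using the auxiliary function $E$ in \eqref{Edef} as the descent potential and carrying out a Kurdyka--{\L}ojasiewicz-style analysis adapted to the inertial structure. Parts (i)--(iii) furnish the three standard ingredients of such an analysis (a sufficient decrease, a subgradient bound, and constancy of the limiting value on the accumulation set), while (iv) is the actual convergence/summability argument.

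For (i), I would first observe that, by Theorem~\ref{thm1}(iii), every accumulation point of $\{x^t\}$ lies in $\mathcal{X}\subseteq \mathcal{N}_0$. Since $\{x^t\}$ is bounded and $\mathcal{N}_0$ is open, it follows that $x^t\in\mathcal{N}_0$ for all sufficiently large $t$, so by Assumption~\ref{assum1} we may take $\xi^t=\nabla P_2(x^t)$ and $\nabla P_2$ is Lipschitz (say with modulus $L_2$) on a bounded neighborhood containing the tail of $\{x^t\}$. Next I would write the optimality condition of the subproblem defining $x^t$, namely
\begin{equation*}
-\nabla f(y^{t-1})+\nabla P_2(x^{t-1})-L(x^t-y^{t-1})\in \partial P_1(x^t),
\end{equation*}
and use it to exhibit the explicit pair
\begin{equation*}
\bigl(u^t,v^t\bigr):=\Bigl(\nabla f(x^t)-\nabla f(y^{t-1})+\nabla P_2(x^{t-1})-\nabla P_2(x^t)+L\beta_{t-1}(x^{t-1}-x^{t-2}),\,L(x^{t-1}-x^t)\Bigr)\in\partial E(x^t,x^{t-1}).
\end{equation*}
Lipschitzness of $\nabla f$ and $\nabla P_2$ together with the definition of $y^{t-1}$ yield
$\|u^t\|\le (L+L_2)\|x^t-x^{t-1}\|+2L\,\beta_{t-1}\|x^{t-1}-x^{t-2}\|$, and $\|v^t\|=L\|x^t-x^{t-1}\|$, both of which tend to zero by Theorem~\ref{thm1}(ii).

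For (ii), the inequality \eqref{rel1} obtained inside the proof of Theorem~\ref{thm1} is exactly the statement that $\{E(x^t,x^{t-1})\}$ is nonincreasing; combining Theorem~\ref{thm1}(ii) with Proposition~\ref{Fconstant}(i) gives $E(x^t,x^{t-1})\to\zeta$. For (iii), boundedness of $\{x^t\}$ and $\|x^t-x^{t-1}\|\to 0$ show that the accumulation set of $\{(x^t,x^{t-1})\}$ is exactly the diagonal image $\Upsilon$ of $\Omega$; continuity of the quadratic term and Proposition~\ref{Fconstant}(ii) then give $E\equiv\zeta$ on $\Upsilon$.

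For (iv), the main step, I would first dispose of the trivial case in which $E(x^{t_0},x^{t_0-1})=\zeta$ for some finite $t_0$: monotonicity of $E$ and \eqref{rel1} would then force $x^t=x^{t-1}$ for all $t\ge t_0$, and the conclusion is immediate. Otherwise $E(x^t,x^{t-1})>\zeta$ for all $t$. Since $\Upsilon$ is compact and $E$ is a KL function constant on $\Upsilon$, Lemma~\ref{UNKLF} supplies $\epsilon,a>0$ and $\phi\in\Xi_a$ such that, for all $t$ large enough (using (ii) and (iii) to ensure proximity to $\Upsilon$ and closeness of $E(x^t,x^{t-1})$ to $\zeta$),
\begin{equation*}
\phi'\bigl(E(x^t,x^{t-1})-\zeta\bigr)\,\mathrm{dist}\bigl(0,\partial E(x^t,x^{t-1})\bigr)\ge 1.
\end{equation*}
I would combine this with the subgradient estimate from (i), the concavity inequality $\phi(s)-\phi(s')\ge\phi'(s)(s-s')$ applied to $s=E(x^t,x^{t-1})-\zeta$ and $s'=E(x^{t+1},x^t)-\zeta$, and the descent estimate \eqref{rel1} to get a recurrence of the form
\begin{equation*}
\|x^{t+1}-x^t\|^2\le C\,\bigl[\phi_t-\phi_{t+1}\bigr]\,\bigl(\|x^t-x^{t-1}\|+\|x^{t-1}-x^{t-2}\|\bigr),
\end{equation*}
where $\phi_t:=\phi(E(x^t,x^{t-1})-\zeta)$ and $C>0$ depends on $L$, $L_2$ and $\sup_t\beta_t$. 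A Young-inequality manipulation (splitting the right-hand side as a geometric mean) then yields $\|x^{t+1}-x^t\|\le\tfrac14\|x^t-x^{t-1}\|+\tfrac14\|x^{t-1}-x^{t-2}\|+C'(\phi_t-\phi_{t+1})$; summing over $t$ and using telescoping gives $\sum_t\|x^t-x^{t-1}\|<\infty$, hence $\{x^t\}$ is Cauchy and converges, with the limit being a stationary point by Theorem~\ref{thm1}(iii).

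The main obstacle is the last step: the extrapolated iteration produces a subgradient bound involving \emph{two} past differences $\|x^t-x^{t-1}\|$ and $\|x^{t-1}-x^{t-2}\|$, so the usual one-step KL manipulation does not close directly. Handling this requires either the two-term Young splitting described above, or running the KL argument on the enlarged variable $(x^t,x^{t-1})$ — which is precisely why the potential $E(x,y)$ with the quadratic coupling $\tfrac{L}{2}\|x-y\|^2$ is introduced rather than working with $F$ alone.
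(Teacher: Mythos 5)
Your proposal is correct and follows essentially the same route as the paper: the same explicit subgradient of $E$ at $(x^t,x^{t-1})$ bounded by $\|x^t-x^{t-1}\|+\|x^{t-1}-x^{t-2}\|$, the same use of \eqref{rel1} as the sufficient-decrease inequality for $E$, the uniformized KL property on $\Upsilon$, and the same AM--GM/telescoping trick to absorb the two-term right-hand side. The only quibble is an index slip in your key recurrence: the descent estimate \eqref{rel1} controls $\|x^t-x^{t-1}\|^2$ (not $\|x^{t+1}-x^t\|^2$) by $E(x^t,x^{t-1})-E(x^{t+1},x^t)$, so the left-hand side of your displayed inequality should be $\|x^t-x^{t-1}\|^2$; the summation argument goes through unchanged with this correction.
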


\begin{proof}
From Theorem \ref{thm1}(i), we see that $\{x^t\}$ is bounded. This together with the definition of $\Omega$ implies that $\lim\limits_{t\rightarrow \infty} \mathrm{dist}(x^t, \Omega)=0$. Also recall from Theorem \ref{thm1}(iii) that $\Omega \subseteq \mathcal{X}$. Thus, for any $\nu > 0$, there exists $T_0 > 0$ so that ${\rm dist}(x^t,\Omega) < \nu$ and $x^t \in \mathcal{N}_0$ whenever $t\ge T_0$, where $\mathcal{N}_0$ is the open set from Assumption~\ref{assum1}. Moreover, since $\Omega$ is compact due to the boundedness of $\{x^t\}$, by shrinking $\nu$ if necessary, we may assume without loss of generality that $\nabla P_2$ is globally Lipschitz continuous on the bounded set ${\cal N} :=\{x\in {\cal N}_0:\; {\rm dist}(x,\Omega) < \nu\}$.

Next, considering the subdifferential of the function $E$ in \eqref{Edef} at the point $(x^t, x^{t-1})$ for $t\ge T_0$, we have
\begin{equation}\label{subdiffx}
\partial E(x^t, x^{t-1}) = [\{\nabla f(x^t) - \nabla P_2(x^t) + L(x^t-x^{t-1})\} + \partial P_1(x^t)]\times \{-L(x^t-x^{t-1})\},
\end{equation}
where we made use of the definition of $P$, the facts that $P_2$ is continuously differentiable in $\cal N$ and that $x^t\in \cal N$ for $t\ge T_0$.

On the other hand, using the first-order optimality condition of the subproblem \eqref{iteratePG} in ${\rm pDCA}_e$, we have for any $t\ge T_0 + 1$ that
\begin{equation*}
-L(x^{t}-y^{t-1})-\nabla f(y^{t-1})+\nabla P_2(x^{t-1})\in \partial P_1(x^{t}),
\end{equation*}
since $P_2$ is continuously differentiable in $\cal N$ and $x^{t-1}\in \cal N$ whenever $t\ge T_0+1$.
Using this relation, we see further that
\begin{equation*}
\begin{split}
&-L(x^{t-1}-y^{t-1})+\nabla f(x^t)-\nabla f(y^{t-1})+\nabla P_2(x^{t-1})- \nabla P_2(x^t)\\
& = \nabla f(x^t)-\nabla P_2(x^{t})+ L(x^t-x^{t-1})-L(x^{t}-y^{t-1})-\nabla f(y^{t-1})+\nabla P_2(x^{t-1}) \\
& \in \nabla f(x^t)-\nabla P_2(x^{t})+ L(x^t-x^{t-1})+\partial P_1(x^t).
\end{split}
\end{equation*}
Combining this with \eqref{subdiffx}, we obtain
\[
(-L(x^{t-1}-y^{t-1})+\nabla f(x^t)-\nabla f(y^{t-1})+\nabla P_2(x^{t-1})- \nabla P_2(x^t),-L(x^t-x^{t-1}))\in \partial E(x^t,x^{t-1}).
\]
Using this, the definition of $y^t$ and the global Lipschitz continuity of $\nabla f$ and $\nabla P_2$ on $\cal N$, we see that there exists $C > 0$ such that
\begin{equation}\label{subgradientE}
\mathrm{dist}((0,0),\partial E(x^t, x^{t-1}))\le C(\|x^t-x^{t-1}\|+\|x^{t-1}-x^{t-2}\|)
\end{equation}
whenever $t\ge T_0 + 1$.
Since $\|x^{t+1}-x^{t}\|\rightarrow 0$ according to Theorem~\ref{thm1}(ii), we conclude that
\begin{equation*}
\lim\limits_{t\rightarrow \infty}\mathrm{dist}((0,0),\partial E(x^t, x^{t-1}))=0,
\end{equation*}
which proves (i).

We now prove (ii) and (iii). Using the fact that $\sup\limits_t\beta_t < 1$, the definition of $E$ and \eqref{rel1}, we see that there exists a positive number $D$ such that
\begin{equation}\label{decreaseE}
E(x^{t},x^{t-1})-E(x^{t+1},x^{t})\ge D\|x^t - x^{t-1}\|^2
\end{equation}
for all $t$.
In particular, the sequence $\{E(x^t, x^{t-1})\}$ is nonincreasing. Since this sequence is also bounded below by $v$, it is convergent. Next, in view of Theorem~\ref{thm1}(ii) which says that $\|x^t-x^{t-1}\|\rightarrow 0$, it is not hard to show that the set of accumulation points of $\{(x^t, x^{t-1})\}_{t\ge 1}$ is $\Upsilon$. Moreover,
\begin{equation*}
\lim_{t\to\infty}E(x^t,x^{t-1}) = \zeta,
\end{equation*}
thanks to Proposition~\ref{Fconstant}(i).
Furthermore, for any $(\hat x,\hat x)\in \Upsilon$ so that $\hat x\in \Omega$, we have $E(\hat x,\hat x) = F(\hat x)=\zeta$, where the last equality follows from Proposition~\ref{Fconstant}(ii). Since $\hat x\in \Omega$ is arbitrary, we conclude that $E\equiv \zeta$ on $\Upsilon$.  This proves (ii) and (iii).

Finally, we prove (iv). In view of Theorem~\ref{thm1}(iii), it suffices to show that $\{x^t\}$ is convergent.
We first consider the case that there exists a $t>0$ such that $E(x^{t},x^{t-1})=\zeta$. Since $\{E(x^t, x^{t-1})\}$ is nonincreasing and convergent to $\zeta$ due to (ii), we conclude that for any $\bar{t}\ge 0$,  $E(x^{t+\bar{t}},x^{t+\bar{t}-1})=\zeta$. Hence, we have from \eqref{decreaseE} that $x^t=x^{t+\bar{t}}$ for any $\bar{t}\ge 0$, meaning that $\{x^t\}$ converges finitely.

We next consider the case that $E(x^{t},x^{t-1})>\zeta$ for all $t$.
Since $E$ is a KL function, $\Upsilon$ is a compact subset of ${\rm dom}\, \partial E$ and $E\equiv \zeta$ on $\Upsilon$, by Lemma \ref{UNKLF}, there exist an $\epsilon>0$ and a continuous concave function $\phi\in \Xi_a$ with $a>0$ such that
\begin{equation}\label{loj}
\phi'(E(x,y)-\zeta)\mathrm{dist}((0,0),\partial E(x,y))\ge 1
\end{equation}
for all $(x,y) \in U$, where
\begin{equation*}
U = \left\{(x,y)\in \R^n\times\R^n: \mathrm{dist}((x,y),\Upsilon)<\epsilon \right\}\cap \left\{(x,y)\in \R^n\times\R^n: \zeta<E(x,y)<\zeta+a\right\}.
\end{equation*}
Since $\Upsilon$ is the set of accumulation points of $\{(x^t,x^{t-1})\}_{t\ge 1}$ by (iii), and $\{x^t\}$ is bounded due to Theorem~\ref{thm1}(i), we have
\[
\lim_{t\to \infty}{\rm dist}((x^t,x^{t-1}),\Upsilon) = 0.
\]
Hence, there exists $T_1>0$ such that $\mathrm{dist}((x^t,x^{t-1}),\Upsilon)<\epsilon$ whenever $t\ge T_1$. In addition, since the sequence $\{E(x^t, x^{t-1})\}$ is nonincreasing and convergent to $\zeta$ by (ii), there exists $T_2>0$ such that $\xi<E(x^t,x^{t-1})<\xi+a$ for all $t\ge T_2$. Taking $\bar{T} = \max\{T_0+1,T_1, T_2\}$, then the sequence $\{(x^t,x^{t-1})\}_{t\ge \bar{T}}$ belongs to $U$. Hence we deduce from \eqref{loj} that
\begin{equation}\label{lojseq}
\phi'(E(x^t,x^{t-1})-\zeta)\cdot \mathrm{dist}((0,0),\partial E(x^t,x^{t-1}))\ge 1,~~~\mbox{for all } t\ge \bar{T}.
\end{equation}
From the concavity of $\phi$, we see further that for any $t\ge \bar{T}$,
\begin{equation*}
\begin{split}
&\left[\phi(E(x^t, x^{t-1})-\zeta)-\phi(E(x^{t+1}, x^t)-\zeta)\right]\cdot\mathrm{dist}((0,0),\partial E(x^t,x^{t-1}))\\
&\ge \phi'(E(x^t, x^{t-1})-\zeta))\cdot\mathrm{dist}((0,0),\partial E(x^t,x^{t-1}))\cdot(E(x^t, x^{t-1})-E(x^{t+1}, x^t))\\
&\ge E(x^t, x^{t-1})-E(x^{t+1}, x^t),
\end{split}
\end{equation*}
where the last inequality holds due to \eqref{lojseq} and the fact that $\{E(x^t, x^{t-1})\}$ is nonincreasing.
Combining this with \eqref{subgradientE} and \eqref{decreaseE} and rearranging terms, we obtain that for any $t\ge \bar T$,
\begin{equation}\label{LOJ3}
\|x^t - x^{t-1}\|^2\le \frac{C}{D}\left(\phi(E(x^t, x^{t-1})-\zeta)-\phi(E(x^{t+1}, x^t)-\zeta)\right)\cdot \left(\|x^t-x^{t-1}\|+\|x^{t-1}-x^{t-2}\|\right).
\end{equation}
Taking square root on both sides of \eqref{LOJ3} and using the AM-GM inequality, we have
\begin{equation*}
\begin{split}
\|x^t - x^{t-1}\| &\le \sqrt{\frac{2C}{D}\left(\phi(E(x^t, x^{t-1})-\zeta)-\phi(E(x^{t+1}, x^t)-\zeta)\right)} \cdot \sqrt{\frac{\|x^t-x^{t-1}\|+\|x^{t-1}-x^{t-2}\|}{2}}\\
&\le \frac{C}{D}\left(\phi(E(x^t, x^{t-1})-\zeta)-\phi(E(x^{t+1}, x^t)-\zeta)\right) + \frac{1}{4}\|x^t-x^{t-1}\|+\frac{1}{4}\|x^{t-1}-x^{t-2}\|,
\end{split}
\end{equation*}
which implies that
\begin{equation}\label{lojseq-5}
\frac12\|x^t - x^{t-1}\| \le \frac{C}{D}\left(\phi(E(x^t, x^{t-1})-\zeta)-\phi(E(x^{t+1}, x^t)-\zeta)\right) + \frac14(\|x^{t-1}-x^{t-2}\|-\|x^{t}-x^{t-1}\|).
\end{equation}
Summing the above relation from $t=\bar T$ to $\infty$, we have
\begin{equation*}
\sum_{t=\bar T}^{\infty}\|x^t-x^{t-1}\|\le \frac{2C}{D}\phi(E(x^{\bar T}, x^{\bar T-1})-\zeta) + \frac12\|x^{\bar T-1}-x^{\bar T-2}\|< \infty,
\end{equation*}
which implies the convergence of $\{x^t\}$ as well as the summability of $\{\|x^{t+1}-x^t\|\}_{t\ge 0}$. This completes the proof.
\end{proof}
\begin{remark}
If the objective is not level bounded but we still have $v>-\infty$ (which can be true for least squares with regularizers in Examples~\ref{example:mcp}, \ref{example3} and \ref{example4}), we can still show that $\|x^t-x^{t-1}\| \rightarrow 0$ by following the same arguments as in the proof of Theorem~\ref{thm1}(ii). Consequently, if the sequence $\{x^t\}$ also has an accumulation point, then using a similar proof as Theorem~\ref{thm1}(iii), this accumulation point can be shown to be a stationary point of \eqref{P1}.
\end{remark}
We next consider the convergence rate of the sequence $\{x^t\}$ under the assumption that the auxiliary function $E$ is a KL function whose $\phi \in \Xi_a$ (see Definition~\ref{KLF}) takes the form $\phi(s)=cs^{1-\theta}$ for some $\theta\in [0,1)$. This kind of convergence rate analysis has also been performed for other optimization algorithms; see, for example, \cite{AB2009}. Our analysis is similar to theirs but makes use of the auxiliary function $E$ in \eqref{Edef}.

\begin{theorem}%\label{converge_rate}
Suppose that Assumption~\ref{assum1} holds. Let $\{x^{t}\}$ be a sequence generated by ${\rm pDCA}_e$ for solving \eqref{P1} and suppose that $\{x^{t}\}$ converges to some $\bar{x}$. Suppose further that $E$ is a KL function with $\phi$ in the KL inequality \eqref{KL_equation} taking the form $\phi(s)=cs^{1-\theta}$ for some $\theta \in [0,1)$ and $c>0$. Then the following statements hold.
\begin{enumerate}[{\rm (i)}]
  \item If $\theta=0$, then there exists $t_0>0$ so that $x^t$ is constant for $t> t_0$;
  \item If $\theta \in (0,\frac{1}{2}]$, then there exist $c_1>0$, $t_1>0$ and $\eta \in (0,1)$ such that $\|x^t-\bar{x}\| < c_1\eta^t$ for $t> t_1$;
  \item If $\theta \in (\frac{1}{2},1)$, then there exist $c_2>0$ and $t_2>0$ such that $\|x^t-\bar x\| < c_2 t^{-\frac{1-\theta}{2\theta-1}}$ for $t> t_2$.
\end{enumerate}
\end{theorem}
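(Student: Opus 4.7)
The plan is to adapt the standard Kurdyka-{\L}ojasiewicz-based rate argument of \cite{AB2009} to the auxiliary function $E$ used in Theorem~\ref{seq_converge}, recycling three quantitative ingredients already obtained in that proof. Introduce the shorthands $\Delta_t := E(x^t,x^{t-1})-\zeta\ge 0$, $s_t := \|x^t-x^{t-1}\|$, and the tail sum $b_t := \sum_{k\ge t}s_k$, which is finite by Theorem~\ref{seq_converge}(iv) and satisfies $\|x^t-\bar x\|\le b_{t+1}$ by the triangle inequality. The three recycled ingredients are: (a) the descent estimate $\Delta_t-\Delta_{t+1}\ge Ds_t^2$ from \eqref{decreaseE}; (b) the subgradient bound $\mathrm{dist}((0,0),\partial E(x^t,x^{t-1}))\le C(s_t+s_{t-1})$ from \eqref{subgradientE}; and (c) the summed version of \eqref{lojseq-5} obtained by plugging in $\phi(s)=cs^{1-\theta}$, telescoping from index $t$ to $\infty$, and using $s_T\to 0$:
\[
\tfrac12\, b_t \;\le\; \tfrac{Cc}{D}\,\Delta_t^{1-\theta} + \tfrac14\, s_{t-1}.
\]
Combining the KL inequality with (b) also yields, whenever $\Delta_t>0$, the handy bound $\Delta_t^\theta\le c(1-\theta)C(s_t+s_{t-1}) = c(1-\theta)C(b_{t-1}-b_{t+1})$.

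For part (i), $\theta=0$, the KL inequality degenerates to $c\cdot\mathrm{dist}((0,0),\partial E(x^t,x^{t-1}))\ge 1$, so (b) forces $cC(s_t+s_{t-1})\ge 1$; since $s_t\to 0$ by Theorem~\ref{thm1}(ii), this is impossible past some index, hence $\Delta_t=0$ eventually. The finite-termination argument embedded in the proof of Theorem~\ref{seq_converge}(iv) then makes $\{x^t\}$ constant for all $t$ large.

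For part (ii), $\theta\in(0,\tfrac12]$, I would exploit the exponent $(1-\theta)/\theta\ge 1$: write $\Delta_t^{1-\theta}=(\Delta_t^\theta)^{(1-\theta)/\theta}$, apply the KL bound above, and use $b_{t-1}-b_{t+1}\to 0$ to obtain $\Delta_t^{1-\theta}\le M(b_{t-1}-b_{t+1})$ for some $M>0$ and all large $t$. Plugging this into (c) produces a linear two-step recurrence $\tfrac12 b_t\le \tfrac{CcM}{D}(b_{t-1}-b_{t+1})+\tfrac14(b_{t-1}-b_t)$; isolating $b_{t+1}$ and using monotonicity $b_{t+1}\le b_t$ yields a contraction $b_{t+1}\le\rho\, b_{t-1}$ with some $\rho\in(0,1)$. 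Geometric decay of $b_t$, and hence of $\|x^t-\bar x\|\le b_{t+1}$, follows.

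For part (iii), $\theta\in(\tfrac12,1)$, the previous route breaks because $(1-\theta)/\theta<1$, so a two-step strategy is needed. First, combine the KL bound with (a) through
\[
\Delta_t^{2\theta}\le 2[c(1-\theta)C]^2(s_t^2+s_{t-1}^2)\le\tfrac{2[c(1-\theta)C]^2}{D}(\Delta_{t-1}-\Delta_{t+1}),
\]
giving a scalar two-step recurrence $\Delta_t^{2\theta}\le K(\Delta_{t-1}-\Delta_{t+1})$ with $2\theta>1$. Passing to $\tilde\Delta_m:=\Delta_{2m}$ and using the monotonicity of $\Delta_t$ (so that $\Delta_{2m-1}^{2\theta}\ge\tilde\Delta_m^{2\theta}$ and $\Delta_{2m-2}-\Delta_{2m}=\tilde\Delta_{m-1}-\tilde\Delta_m$) converts this into the one-step form $\tilde\Delta_m^{2\theta}\le K(\tilde\Delta_{m-1}-\tilde\Delta_m)$, to which the classical discrete decay lemma of \cite[Theorem~2]{AB2009} applies, yielding $\Delta_t=O(t^{-1/(2\theta-1)})$. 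Second, feed this back into (c): $\Delta_t^{1-\theta}=O(t^{-(1-\theta)/(2\theta-1)})$, while (a) also gives $s_{t-1}\le\sqrt{\Delta_{t-1}/D}=O(t^{-1/(2(2\theta-1))})$. Since $\theta>\tfrac12$ implies $(1-\theta)/(2\theta-1)<1/(2(2\theta-1))$, the $\Delta_t^{1-\theta}$ term dominates, so $b_t=O(t^{-(1-\theta)/(2\theta-1)})$, and hence $\|x^t-\bar x\|\le b_{t+1}\le c_2\, t^{-(1-\theta)/(2\theta-1)}$. The hardest step is this part~(iii): transferring the polynomial rate on the Lyapunov-like $\Delta_t$ (produced by the two-step recurrence coming from the auxiliary variable $x^{t-1}$ in $E$) into the same-order polynomial rate on the tail sum $b_t$ relies on the exponent comparison above, without which the residual $s_{t-1}$ in (c) could a priori degrade the decay rate.
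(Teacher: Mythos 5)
Your argument for parts (i) and (ii) is essentially the paper's: part (i) is the same contradiction via the degenerate KL inequality and the subgradient bound, and part (ii) arrives at the same linear two-step contraction on the tail sums (the paper works with $S_t=\sum_{i\ge t}\|x^{i+1}-x^i\|=b_{t+1}$ and gets $S_t\le\frac{C_1+1}{C_1+2}S_{t-2}$; your $b_{t+1}\le\rho\,b_{t-1}$ is the same estimate with shifted indices). Part (iii), however, is a genuinely different route, and it is correct. The paper never passes through a rate on the function values: it raises its tail-sum inequality $S_t\le(C_1+1)(S_{t-2}-S_t)^{\frac{1-\theta}{\theta}}$ to the power $\frac{\theta}{1-\theta}$ to obtain the two-step recurrence $S_t^{\theta/(1-\theta)}\le C_2(S_{t-2}-S_t)$ \emph{directly on the tail sums}, doubles the index, and applies the discrete decay lemma of \cite[Theorem~2]{AB2009} once, to the quantity of interest. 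You instead first establish $\Delta_t^{2\theta}\le K(\Delta_{t-1}-\Delta_{t+1})$ on the energy gaps, extract $\Delta_t=O(t^{-1/(2\theta-1)})$ via the same doubling trick and decay lemma, and then transfer this to $b_t$ through the summed KL estimate (c), checking that the residual $s_{t-1}\le\sqrt{\Delta_{t-1}/D}=O(t^{-1/(2(2\theta-1))})$ decays strictly faster than $\Delta_t^{1-\theta}$ when $\theta>\frac12$. Your exponent comparison is right, and you correctly flag it as the step on which the whole transfer hinges; the paper's formulation sidesteps this comparison entirely by never separating the two contributions. What your route buys is the intermediate rate $E(x^t,x^{t-1})-\zeta=O(t^{-1/(2\theta-1)})$ on the objective-like quantity, which the paper does not state; what the paper's route buys is one fewer application of the decay machinery and no need for the dominance argument. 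Two small points of hygiene: like the paper, you should dispose explicitly of the degenerate case where $E(x^{t_0},x^{t_0-1})=\zeta$ for some finite $t_0$ (then the sequence is eventually constant and (ii), (iii) are trivial) before assuming $\Delta_t>0$ throughout; and all of your estimates (a)--(c) and the KL inequality are only valid for $t$ beyond the threshold $\bar T$ constructed in Theorem~\ref{seq_converge}, which you use implicitly but should state.
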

\begin{proof}
First, we prove (i). If $\theta=0$, we claim that there must exist $t_0>0$ such that $E(x^{t_0},x^{t_0-1})=\zeta$. Suppose to the contrary that $E(x^t,x^{t-1})>\zeta$ for all $t>0$. Since $\lim\limits_{t\to\infty} x^t = \bar x$ and the sequence $\{E(x^t, x^{t-1})\}$ is nonincreasing and convergent to $\zeta$ by Theorem~\ref{seq_converge}(ii), we have from $\phi(s)=cs$ and the KL inequality \eqref{lojseq} that for all sufficiently large $t$,
\begin{equation*}
\mathrm{dist}((0,0),\partial E(x^t,x^{t-1}))\ge\frac{1}{c},
\end{equation*}
which contradicts Theorem~\ref{seq_converge}(i). Thus, there exists $t_0>0$ so that $E(x^{t_0},x^{t_0-1})=\zeta$.
Since $\{E(x^t, x^{t-1})\}$ is nonincreasing and convergent to $\zeta$, it must then hold that $E(x^{t_0+\bar{t}},x^{t_0+\bar{t}-1})=\zeta$ for any $\bar t \ge 0$. Thus, we conclude from \eqref{decreaseE} that $x^{t_0}=x^{t_0+\bar{t}}$ for any $\bar{t}\ge 0$. This proves (i).

We next turn to the case that $\theta\in (0,1)$. If there exists $t_0>0$ such that $E(x^{t_0},x^{t_0-1})=\zeta$, then one can show that $\{x^t\}$ is finitely convergent as above, and the desired conclusions hold trivially. Hence, for $\theta \in (0,1)$, we only need to consider the case when $E(x^t,x^{t-1})>\zeta$ for all $t>0$.

Define $H_t = E(x^t, x^{t-1})-\zeta$ and $S_t = \sum_{i=t}^\infty \|x^{i+1}-x^{i}\|$, where $S_t$ is well-defined due to Theorem~\ref{seq_converge}(iv). Then, using \eqref{lojseq-5}, we have for any $t\ge \bar T$ (where $\bar T$ is defined as in \eqref{lojseq}) that
\begin{equation*}
\begin{split}
&S_{t} = 2\sum_{i=t}^\infty \frac12\|x^{i+1}-x^{i}\|\le 2\sum_{i=t}^\infty \frac12\|x^{i}-x^{i-1}\| \\
& \le 2\sum_{i=t}^\infty \left[\frac{C}{D}\left(\phi(E(x^i, x^{i-1})-\zeta)-\phi(E(x^{i+1}, x^i)-\zeta)\right) + \frac14(\|x^{i-1}-x^{i-2}\|-\|x^{i}-x^{i-1}\|)\right]\\
& \le \frac{2C}{D}\phi(E(x^t, x^{t-1})-\zeta) + \frac12\|x^{t-1}-x^{t-2}\| = \frac{2C}{D}\phi(H_t) + \frac12(S_{t-2} - S_{t-1}).
\end{split}
\end{equation*}
Using this and the fact that $\{S_t\}$ is nonincreasing, we obtain further that
\begin{equation}\label{rate2}
S_{t} \le \frac{2C}{D}\phi(H_t) + \frac12(S_{t-2}-S_{t})
\end{equation}
for all $t\ge \bar T$.
On the other hand, since $\lim\limits_{t\to\infty} x^t = \bar x$ and the sequence $\{E(x^t, x^{t-1})\}$ is nonincreasing and convergent to $\zeta$ by Theorem~\ref{seq_converge}(ii), we have from the KL inequality \eqref{lojseq} with $\phi(s)=cs^{1-\theta}$ that for all sufficiently large $t$,
\begin{equation}\label{rate3}
c(1-\theta)(H_t)^{-\theta}\mathrm{dist}((0,0),\partial E(x^t,x^{t-1})) \ge  1.
\end{equation}
In addition, using \eqref{subgradientE} and the definition of $S_t$, we see that for all sufficiently large $t$,
\begin{equation}\label{rate4}
\mathrm{dist}((0,0),\partial E(x^t,x^{t-1})) \le  C(S_{t-2}-S_{t}).
\end{equation}
Combining \eqref{rate3} and \eqref{rate4}, we have for all sufficiently large $t$ that
\begin{equation*}
(H_t)^{\theta} \le C\cdot c(1-\theta) \cdot (S_{t-2}-S_{t}).
\end{equation*}
Raising to a power of $\frac{1-\theta}{\theta}$ to both sides of the above inequality and scaling both sides by $c$, we obtain that
\begin{equation*}
c(H_t)^{1-\theta} \le c\cdot\left(C\cdot c(1-\theta) \cdot (S_{t-2}-S_{t})\right)^{\frac{1-\theta}{\theta}}.
\end{equation*}
Combining this with \eqref{rate2} and recalling that $\phi(H_t) = c(H_t)^{1-\theta}$, we see that for all sufficiently large $t$,
\begin{equation}\label{rate7}
S_{t} \le C_1(S_{t-2}-S_{t})^{\frac{1-\theta}{\theta}} + \frac12(S_{t-2}-S_{t})\le C_1(S_{t-2}-S_{t})^{\frac{1-\theta}{\theta}} + S_{t-2}-S_{t},
\end{equation}
where $C_1 = \frac{2C}{D}c\cdot\left(C\cdot c(1-\theta)\right)^{\frac{1-\theta}{\theta}}$.

We now consider two cases: $\theta \in (0,\frac12]$ or $\theta\in (\frac12,1)$.

Suppose first that $\theta \in (0,\frac{1}{2}]$. Then $\frac{1-\theta}{\theta}\ge 1$. Since $\|x^{t+1}-x^{t}\|\rightarrow 0$ from Theorem~\ref{thm1}(ii), it holds that $S_{t-2}-S_{t}\rightarrow 0$. From these and \eqref{rate7}, we conclude that there exists $t_1>0$ so that for all $t\ge t_1$, we have
\begin{equation*}
S_{t} \le (C_1+1)(S_{t-2}-S_{t}),
\end{equation*}
which implies that $S_{t} \le \frac{C_1+1}{C_1+2} S_{t-2}$. Hence,
\[
\|x^t-\bar{x}\| \le \sum_{i=t}^\infty \|x^{i+1}-x^i\| = S_t \le S_{t_1-2}\left(\sqrt{\frac{C_1+1}{C_1+2}}\right)^{t-t_1+1}
\]
for all $t\ge t_1$. This proves (ii).

Finally, we consider the case that $\theta \in (\frac{1}{2},1)$. In this case, we have $\frac{1-\theta}{\theta} <1$. Combining this with \eqref{rate7} and the fact that $S_{t-2}-S_{t} \rightarrow 0$, we see that there exists $t_2 > 0$ such that for all $t\ge t_2$, we have
\begin{equation*}
\begin{split}
S_{t} &\le C_1(S_{t-2}-S_{t})^{\frac{1-\theta}{\theta}} + S_{t-2}-S_{t}\\
& \le C_1(S_{t-2}-S_{t})^{\frac{1-\theta}{\theta}} + (S_{t-2}-S_{t})^{\frac{1-\theta}{\theta}}\\
& = (C_1+1)(S_{t-2}-S_{t})^{\frac{1-\theta}{\theta}}.
\end{split}
\end{equation*}
Raising to a power of $\frac{\theta}{1-\theta}$ to both sides of the above inequality, we see further that,
\begin{equation*}
S_t^{\frac{\theta}{1-\theta}}\le C_2(S_{t-2}-S_t)
\end{equation*}
whenever $t\ge t_2$, where $C_2 = (C_1+1)^{\frac{\theta}{1-\theta}}$. Consider the sequence $\Delta_t:= S_{2t}$. Then for any $t\ge \lceil\frac{t_2}2\rceil$, we have
\begin{equation*}
\Delta_t^{\frac{\theta}{1-\theta}}\le C_2(\Delta_{t-1}-\Delta_t).
\end{equation*}
Proceeding as in the proof of \cite[Theorem~2]{AB2009} starting from \cite[Equation~(13)]{AB2009}, one can show similarly that for all sufficiently large $t$,
\[
\Delta_t \le C_3 t^{-\frac{1-\theta}{2\theta-1}}
\]
for some $C_3 > 0$; see the first equation on \cite[Page~15]{AB2009}. This implies that for all sufficiently large $t$, we have
\begin{equation*}
\|x^t - \bar x\|\le S_t \begin{cases}
  = \Delta_{\frac{t}2}\le 2^\rho C_3 t^{-\rho}\ \ &\mbox{if $t$ is even},\\
  \le S_{t-1} = \Delta_{\frac{t-1}2}\le 2^\rho C_3 (t-1)^{-\rho} \le 4^\rho C_3 t^{-\rho}\ \ &\mbox{if $t$ is odd and $t\ge 2$},
\end{cases}
\end{equation*}
where $\rho := \frac{1-\theta}{2\theta-1}$.
This completes the proof.
\end{proof}

\begin{remark}\label{rem1}
We recall that there are many concrete examples of functions $f$ satisfying the KL property at all points in ${\rm dom}\,\partial f$ with $\phi(s)=cs^{1-\theta}$ for some $\theta \in [0,1)$ and $c>0$. Indeed, all proper closed semialgebraic functions satisfy this property; see, for example, \cite[section 2]{BDL2007} and \cite[section 4.3]{ABRS2010}. We refer the readers to \cite{ABRS2010,LP2016} for more examples. In particular, one can show that if $f(x)=\frac12\|Ax-b\|^2$ for some matrix $A$ and vector $b$, $P$ is given as in any one of the five examples at the beginning of this subsection, then the function $E$ in \eqref{Edef} is a KL function with $\phi(s)=cs^{1-\theta}$ for some $\theta \in [0,1)$ and $c>0$.
\end{remark}

\section{Numerical experiments}\label{sec5}

In this section, we perform numerical experiments to illustrate the efficiency of our algorithm ${\rm pDCA}_e$ for solving problem \eqref{P1}.
All experiments are performed in Matlab 2015b on a 64-bit PC with an Intel(R) Core(TM) i7-4790 CPU (3.60GHz) and 32GB of RAM.

In our numerical tests, we focus on the following DC regularized least squares problem:
\begin{equation}\label{dc_problem}
\min_{x\in \R^n} \frac{1}{2}\|Ax-b\|^2 + P_1(x) - P_2(x),
\end{equation}
where $A\in {\mathbb{R}}^{m\times n}$, $b\in {\mathbb{R}}^{m}$, $P_1$ is a proper closed convex function and $P_2$ is a continuous convex function. We consider two different classes of regularizers: the $\ell_{1-2}$ regularizer discussed in Example~\ref{lsl1l2} and the logarithmic regularizer presented in Example~\ref{log}.
We compare three algorithms for solving \eqref{dc_problem} with these regularizers: our algorithm ${\rm pDCA}_e$, the proximal DCA ({\rm pDCA}) studied in various work such as \cite{PT1998} and  \cite{GTT2015}, and the GIST proposed in \cite{GZLHY2013}. We discuss the implementation details of these algorithms below.

${\bf pDCA}_e$. For this algorithm, we set $L = \lambda_{\max}(A^TA)$,\footnote{$\lambda_{\max}(A^TA)$ is computed via the MATLAB code {\sf lambda = norm(A*A');} when $m\le 2000$, and by {\sf opts.issym = 1;
lambda= eigs(A*A',1,'LM',opts);} otherwise.} choose the extrapolation parameters $\{\beta_t\}$ as in \eqref{fista_extra}, and perform both the fixed restart (with $\bar{T} = 200$) and the adaptive restart strategies as described in Section~\ref{sec3}. We initialize the algorithm at the origin and terminate it when
\begin{equation*}
\frac{\|x^t - x^{t-1}\|}{\max\{1, \|x^t\|\}} < 10^{-5}.
\end{equation*}

${\bf pDCA}$. This is a special case of ${\rm pDCA}_e$ with $\beta_t \equiv 0$. We set $L = \lambda_{\max}(A^TA)$, initialize the algorithm at the origin and terminate it when
\begin{equation*}
\frac{\|x^t - x^{t-1}\|}{\max\{1, \|x^t\|\}} < 10^{-5}.
\end{equation*}
In our experiments below, this algorithm turns out to be very slow, and so we also terminate this algorithm when the iteration number hits 5000.

${\bf GIST}$. This algorithm was proposed in \cite{GZLHY2013}, and is the same as the nonmonotone proximal gradient algorithm described in \cite{WNF2009}
(see also \cite[Appendix A, Algorithm 1]{CLP2016}) applied to $f(x) = \frac{1}{2}\|Ax - b\|^2$ and $P(x) = P_1(x) - P_2(x)$.
Following the notation in \cite[Appendix A, Algorithm 1]{CLP2016}, in our implementation, we set $c = 10^{-4}$, $\tau = 2$, $M =4$, $L_0^0 = 1$, and
\begin{equation*}
L_t^0 = \min \left\{\max \left\{\frac{\|A(x^t - x^{t-1})\|^2}{\|x^t -x^{t-1}\|^2}, 10^{-8}\right\}, 10^8 \right\}
\end{equation*}
for $t\ge 1$. We would like to point out that the subproblem in \cite[Appendix A, A.4]{CLP2016} now becomes
\[
\min\limits_{x\in \R^n}\left\{\langle A^T(Ax^t - b),x - x^t \rangle + \frac{L_t}{2}\|x-x^t\|^2 + P_1(x) - P_2(x)\right\},
\]
which has closed form solutions for the two regularizers used in our experiments below; see the appendices of \cite{GZLHY2013} and \cite{LiuP2016}.
We initialize this algorithm at the origin and terminate it when
\begin{equation*}
\frac{\|x^t - x^{t-1}\|}{\max\{1, \|x^t\|\}} < 10^{-5}.
\end{equation*}

In our numerical experiments below, we compare our algorithm $\rm {pDCA}_e$ with $\rm {pDCA}$ and $\rm {GIST}$ for solving \eqref{dc_problem} on random instances generated as follows. We first generate an $m\times n$ matrix $A$ with i.i.d. standard Gaussian entries, and then normalize this matrix so that the columns of $A$ have unit norms. A subset $T$ of size $s$ is then chosen uniformly at random from $\{1,2,3,\ldots, n\}$ and an $s$-sparse vector $y$ having i.i.d. standard Gaussian entries on $T$ is generated. Finally, we set $b = Ay + 0.01\cdot \hat{n}$, where $\hat{n} \in \R^m$ is a random vector with i.i.d. standard Gaussian entries.

We next present the DC models we use in our numerical tests and the numerical results.

\subsection{Least squares problems with $\ell_{1-2}$ regularizer}

In this subsection, we consider the $\ell_{1-2}$ regularized least squares problem:
\begin{equation}\label{l1-l2}
\mathop{\min}_{x \in \R^n} F_{\ell_{1-2}}(x) = \frac{1}{2}\|Ax - b\|^2 + \lambda \|x\|_1 - \lambda \|x\|,
\end{equation}
where $A \in \R^{m \times n}$, $b \in \R^m$, and $\lambda > 0$ is the regularization parameter. This problem takes the form of \eqref{dc_problem} with $P_1(x) = \lambda \|x\|_1$ and $P_2(x)= \lambda \|x\|$. We assume {\em in addition} that the $A$ in \eqref{l1-l2} does not have zero columns. Using this assumption, Example~\ref{lsl1l2}, Theorem~\ref{seq_converge} and Remark~\ref{rem1}, we see that $F_{\ell_{1-2}}$ is level-bounded, and that if we choose $\lambda < \frac12\|A^{T}b\|_{\infty}$, then the sequence $\{x^t\}$ generated by ${\rm pDCA}_e$ is globally convergent.

In our numerical experiments below, we consider $(m,n,s) = (720i, 2560i, 80i)$ for $i = 1,2, \ldots, 10$. For each triple $(m,n,s)$, we generate 30 instances randomly as described above. The computational results are presented in Tables \ref{table1} and \ref{table2}, which correspond to problem \eqref{l1-l2} with $\lambda = 5 \times 10^{-4}$ and $\lambda = 1 \times 10^{-3}$ respectively.\footnote{These $\lambda$ satisfy $\lambda < \frac12\|A^{T}b\|_{\infty}$ for all our random instances.} We report the time for computing $\lambda_{\max}(A^TA)$ ($\bf t_{\lambda_{\max}}$), the number of iterations (iter),\footnote{In the tables, ``max" means the number of iterations hits 5000.} CPU times in seconds (CPU time),\footnote{The CPU time reported for ${\rm pDCA}_e$ does not include the time for computing $\lambda_{\max}(A^TA)$.} and the function values at termination (fval), averaged over the 30 random instances. We can see that ${\rm pDCA}_e$ always outperforms ${\rm pDCA}$ and ${\rm GIST}$.

\begin{table}[h]
%\tiny
%\scriptsize
%\footnotesize
\small
\caption{Solving \eqref{l1-l2} on random instances, $\lambda = 5 \times 10^{-4}$}\label{table1}
\hspace*{-1 cm}
\begin{tabular}{|c|c|c||c||c|c|c||c|c|c||c|c|c|}  \hline
\multicolumn{3}{|c||}{problem size}&  & \multicolumn{3}{c||}{iter} & \multicolumn{3}{c||}{CPU time} & \multicolumn{3}{c|}{fval}
\\ %\hline
$n$ & $m$ &$s$& $\bf t_{\lambda_{\max}}$&{${\rm GIST}$} &{${\rm pDCA}_{e}$}&{${\rm pDCA}$}&{${\rm GIST}$}&{${\rm pDCA}_{e}$}&{${\rm pDCA}$} & {${\rm GIST}$}& {${\rm pDCA}_{e}$}& {${\rm pDCA}$}
\\ \hline
  2560 &   720 &    80 &   0.1 &  1736 &   915 &  {\rm max} &   3.0 &   1.2 &   6.1 & 2.9757e-02 & 2.9743e-02 & 4.7049e-02  \\
  5120 &  1440 &   160 &   0.7 &  1726 &   895 &  {\rm max} &  14.2 &   5.4 &  29.6 & 6.1497e-02 & 6.1472e-02 & 9.5797e-02  \\
  7680 &  2160 &   240 &   0.7 &  1747 &   929 &  {\rm max} &  31.0 &  12.1 &  64.7 & 9.3836e-02 & 9.3799e-02 & 1.4394e-01  \\
 10240 &  2880 &   320 &   1.3 &  1754 &   949 &  {\rm max} &  54.8 &  21.8 & 114.6 & 1.2500e-01 & 1.2495e-01 & 1.9063e-01  \\
 12800 &  3600 &   400 &   2.4 &  1767 &   935 &  {\rm max} &  86.8 &  33.9 & 180.6 & 1.5956e-01 & 1.5949e-01 & 2.4367e-01  \\
 15360 &  4320 &   480 &   3.7 &  1757 &   955 &  {\rm max} & 120.5 &  48.5 & 253.2 & 1.8982e-01 & 1.8975e-01 & 2.8811e-01  \\
 17920 &  5040 &   560 &   6.0 &  1778 &   982 &  {\rm max} & 166.6 &  67.5 & 343.7 & 2.2481e-01 & 2.2472e-01 & 3.4110e-01  \\
 20480 &  5760 &   640 &   7.6 &  1780 &   982 &  {\rm max} & 215.6 &  87.5 & 444.4 & 2.5908e-01 & 2.5897e-01 & 3.9319e-01  \\
 23040 &  6480 &   720 &  10.7 &  1782 &   982 &  {\rm max} & 269.8 & 110.4 & 561.4 & 2.9150e-01 & 2.9137e-01 & 4.4057e-01  \\
 25600 &  7200 &   800 &  14.3 &  1799 &   995 &  {\rm max} & 341.4 & 140.1 & 704.0 & 3.2831e-01 & 3.2816e-01 & 4.9679e-01  \\ \hline
\end{tabular}
%\end{center}
\normalsize
%\vskip -.2in
\end{table}

\begin{table}[h]
%\tiny
%\scriptsize
%\footnotesize
\small
\caption{Solving \eqref{l1-l2} on random instances, $\lambda = 1 \times 10^{-3}$}\label{table2}
\hspace*{-1 cm}
\begin{tabular}{|c|c|c||c||c|c|c||c|c|c||c|c|c|}  \hline
\multicolumn{3}{|c||}{problem size}&  & \multicolumn{3}{c||}{iter} & \multicolumn{3}{c||}{CPU time} &  \multicolumn{3}{c|}{fval}
\\ %\hline
$n$ & $m$ &$s$& $\bf t_{\lambda_{\max}}$&{${\rm GIST}$} &{${\rm pDCA}_{e}$}&{${\rm pDCA}$}&{${\rm GIST}$}&{${\rm pDCA}_{e}$}&{${\rm pDCA}$} & {${\rm GIST}$}& {${\rm pDCA}_{e}$}& {${\rm pDCA}$}
\\ \hline
  2560 &   720 &    80 &   0.1 &   925 &   600 &  {\rm max} &   1.7 &   0.8 &   6.1 & 5.9909e-02 & 5.9903e-02 & 7.2646e-02  \\
  5120 &  1440 &   160 &   0.7 &   908 &   602 &  {\rm max} &   7.4 &   3.6 &  29.5 & 1.2002e-01 & 1.2001e-01 & 1.4286e-01  \\
  7680 &  2160 &   240 &   0.6 &   928 &   602 &  {\rm max} &  16.4 &   7.9 &  65.3 & 1.8679e-01 & 1.8677e-01 & 2.2359e-01  \\
 10240 &  2880 &   320 &   1.3 &   941 &   602 &  {\rm max} &  29.0 &  13.8 & 114.4 & 2.5185e-01 & 2.5182e-01 & 3.0125e-01  \\
 12800 &  3600 &   400 &   2.4 &   946 &   602 &  {\rm max} &  45.8 &  21.8 & 179.9 & 3.1906e-01 & 3.1903e-01 & 3.8187e-01  \\
 15360 &  4320 &   480 &   3.8 &   949 &   602 &  {\rm max} &  64.6 &  30.6 & 253.5 & 3.8418e-01 & 3.8414e-01 & 4.6012e-01  \\
 17920 &  5040 &   560 &   6.0 &   943 &   602 &  {\rm max} &  87.4 &  41.7 & 345.8 & 4.4659e-01 & 4.4654e-01 & 5.3129e-01  \\
 20480 &  5760 &   640 &   7.7 &   946 &   602 &  {\rm max} & 112.4 &  53.6 & 444.4 & 5.1037e-01 & 5.1031e-01 & 6.0884e-01  \\
 23040 &  6480 &   720 &  10.6 &   943 &   602 &  {\rm max} & 141.8 &  68.2 & 562.0 & 5.8029e-01 & 5.8022e-01 & 6.9129e-01  \\
 25600 &  7200 &   800 &  14.1 &   946 &   602 &  {\rm max} & 179.0 &  84.9 & 703.7 & 6.4830e-01 & 6.4822e-01 & 7.7247e-01  \\
\hline
\end{tabular}
%\end{center}
\normalsize
%\vskip -.2in
\end{table}

\subsection{Least squares problems with logarithmic regularizer}
In this subsection, we consider the least squares problem with logarithmic regularization function:
\begin{equation}\label{ls-log}
\mathop{\min}_{x \in \R^n} F_{\rm log}(x) = \frac{1}{2}\|Ax - b\|^2 + \sum_{i=1}^{n}\left[\lambda\log (|x_i|+\epsilon) - \lambda\log \epsilon\right],
\end{equation}
where $A \in \R^{m \times n}$, $b \in \R^m$, $\epsilon >0$ is a constant, and $\lambda > 0$ is the regularization parameter. From the discussion in Example \ref{log}, it is easy to show that $F_{\rm log}$ takes the form of \eqref{dc_problem} with $P_1(x) = \frac{\lambda}{\epsilon}\|x\|_1$ and $P_2(x)= \sum_{i=1}^n \lambda\left[\frac{|x_i|}{\epsilon} - \log(|x_i|+\epsilon) + \log\epsilon\right]$. In addition, it is not hard to show that $F_{\rm log}$ is level-bounded. This together with Theorem \ref{seq_converge} and Remark~\ref{rem1} shows that the sequence $\{x^t\}$ generated by ${\rm pDCA}_e$ is globally convergent to a stationary point of \eqref{ls-log}.

In our experiments below, we consider $(m,n,s) = (720i, 2560i, 80i)$, $i = 1,2, \ldots, 10$. For each triple, we generate 30 instances randomly as described above. The computational results are presented in Tables \ref{t3} and \ref{t4}, which correspond to problem \eqref{ls-log} with $\lambda = 5 \times 10^{-4}$ and $\lambda = 1 \times 10^{-3}$ respectively.\footnote{We set $\epsilon = 0.5$ in \eqref{ls-log}.} In these tables, we report the time for computing $\lambda_{\max}(A^TA)$ ($\bf t_{\lambda_{\max}}$), the number of iterations (iter),\footnote{In the tables, ``max" means the number of iterations hits 5000.} CPU times in seconds (CPU time),\footnote{The CPU time reported for ${\rm pDCA}_e$ does not include the time for computing $\lambda_{\max}(A^TA)$.} and the function values at termination (fval), averaged over the 30 random instances. We see from the tables that ${\rm pDCA}_e$ always outperforms ${\rm pDCA}$ and ${\rm GIST}$.

\begin{table}[h]
%\tiny
%\scriptsize
%\footnotesize
\small
\caption{Solving \eqref{ls-log} on random instances, $\lambda = 5 \times 10^{-4}$} \label{t3}
%\begin{center}
\hspace*{-1 cm}
\begin{tabular}{|c|c|c||c||c|c|c||c|c|c||c|c|c|}  \hline
\multicolumn{3}{|c||}{problem size}&  & \multicolumn{3}{c||}{iter} & \multicolumn{3}{c||}{CPU time} &   \multicolumn{3}{c|}{fval}
\\ %\hline
$n$ & $m$ &$s$& $\bf t_{\lambda_{\max}}$&{${\rm GIST}$} &{${\rm pDCA}_{e}$}&{${\rm pDCA}$}&{${\rm GIST}$}&{${\rm pDCA}_{e}$}&{${\rm pDCA}$} & {${\rm GIST}$}& {${\rm pDCA}_{e}$}& {${\rm pDCA}$}
\\ \hline
  2560 &   720 &    80 &   0.1 &  863 &   601 &  {\rm max} &   1.9 &   0.8 &   6.1 & 3.8020e-02 & 3.8013e-02 & 5.3479e-02  \\
  5120 &  1440 &   160 &   0.7 &  866 &   602 &  {\rm max} &   7.4 &   3.6 &  29.4 & 7.5865e-02 & 7.5852e-02 & 1.0691e-01  \\
  7680 &  2160 &   240 &   0.7 &  878 &   602 &  {\rm max} &  16.0 &   7.8 &  64.9 & 1.1419e-01 & 1.1417e-01 & 1.6253e-01  \\
 10240 &  2880 &   320 &   1.3 &  866 &   602 &  {\rm max} &  27.2 &  13.8 & 113.8 & 1.5219e-01 & 1.5217e-01 & 2.1442e-01  \\
 12800 &  3600 &   400 &   2.4 &  869 &   602 &  {\rm max} &  43.1 &  22.0 & 181.9 & 1.8917e-01 & 1.8914e-01 & 2.6717e-01  \\
 15360 &  4320 &   480 &   3.7 &  869 &   602 &  {\rm max} &  59.9 &  30.9 & 256.0 & 2.2823e-01 & 2.2819e-01 & 3.2213e-01  \\
 17920 &  5040 &   560 &   6.0 &  866 &   602 &  {\rm max} &  80.7 &  41.8 & 346.6 & 2.6594e-01 & 2.6589e-01 & 3.7583e-01  \\
 20480 &  5760 &   640 &   7.7 &  874 &   602 &  {\rm max} & 104.9 &  53.8 & 446.4 & 3.0510e-01 & 3.0505e-01 & 4.3300e-01  \\
 23040 &  6480 &   720 &  10.7 &  873 &   602 &  {\rm max} & 132.0 &  67.9 & 563.1 & 3.4211e-01 & 3.4205e-01 & 4.8604e-01  \\
 25600 &  7200 &   800 &  14.3 &  871 &   602 &  {\rm max} & 164.7 &  85.0 & 705.0 & 3.8055e-01 & 3.8049e-01 & 5.4107e-01  \\
\hline
\end{tabular}
%\end{center}
\normalsize
%\vskip -.2in
\end{table}

\begin{table}[h]
%\tiny
%\scriptsize
%\footnotesize
\small
\caption{Solving \eqref{ls-log} on random instances, $\lambda = 1 \times 10^{-3}$}\label{t4}
\hspace*{-1 cm}
\begin{tabular}{|c|c|c||c||c|c|c||c|c|c||c|c|c|}  \hline
\multicolumn{3}{|c||}{problem size}&  & \multicolumn{3}{c||}{iter} & \multicolumn{3}{c||}{CPU time} &     \multicolumn{3}{c|}{fval}
\\ %\hline
$n$ & $m$ &$s$& $\bf t_{\lambda_{\max}}$&{${\rm GIST}$} &{${\rm pDCA}_{e}$}&{${\rm pDCA}$}&{${\rm GIST}$}&{${\rm pDCA}_{e}$}&{${\rm pDCA}$} & {${\rm GIST}$}& {${\rm pDCA}_{e}$}& {${\rm pDCA}$}
\\ \hline
  2560 &   720 &    80 &   0.1 &  473 &   380 &  4531 &   1.0 &   0.5 &   5.7 & 7.6101e-02 & 7.6099e-02 & 7.6125e-02  \\
  5120 &  1440 &   160 &   0.7 &  473 &   400 &  4540 &   4.1 &   2.4 &  27.1 & 1.5200e-01 & 1.5200e-01 & 1.5204e-01  \\
  7680 &  2160 &   240 &   0.7 &  467 &   402 &  4546 &   8.4 &   5.3 &  59.7 & 2.2691e-01 & 2.2691e-01 & 2.2696e-01  \\
 10240 &  2880 &   320 &   1.3 &  475 &   402 &  4549 &  14.6 &   9.2 & 103.4 & 3.0374e-01 & 3.0373e-01 & 3.0381e-01  \\
 12800 &  3600 &   400 &   2.4 &  470 &   401 &  4519 &  22.7 &  14.5 & 162.7 & 3.7530e-01 & 3.7529e-01 & 3.7538e-01  \\
 15360 &  4320 &   480 &   3.8 &  471 &   402 &  4539 &  31.9 &  20.5 & 230.6 & 4.5451e-01 & 4.5450e-01 & 4.5461e-01  \\
 17920 &  5040 &   560 &   6.1 &  471 &   402 &  4564 &  42.9 &  27.6 & 312.5 & 5.2941e-01 & 5.2939e-01 & 5.2953e-01  \\
 20480 &  5760 &   640 &   7.8 &  475 &   402 &  4554 &  56.1 &  35.9 & 406.5 & 6.0388e-01 & 6.0385e-01 & 6.0401e-01  \\
 23040 &  6480 &   720 &  10.7 &  476 &   402 &  4593 &  70.9 &  45.3 & 516.9 & 6.8519e-01 & 6.8516e-01 & 6.8534e-01  \\
 25600 &  7200 &   800 &  14.3 &  475 &   402 &  4559 &  88.2 &  56.8 & 642.5 & 7.5684e-01 & 7.5681e-01 & 7.5701e-01  \\
\hline
\end{tabular}
%\end{center}
\normalsize
%\vskip -.2in
\end{table}

\section{Conclusion }\vspace{0.5ex}

In this paper, we propose a proximal difference-of-convex algorithm with extrapolation ($\rm {pDCA}_e$) for solving \eqref{P1}, which reduces to the proximal DCA when $\beta_t \equiv 0$. Our algorithmic framework allows a wide range of choices of the extrapolation parameters $\{\beta_t\}$, including those used in FISTA with fixed restart \cite{DC2015}. We establish global subsequential convergence of the sequence generated by $\rm {pDCA}_e$. In addition, by assuming the Kurdyka-{\L}ojasiewicz property of the objective and the locally Lipschitz differentiability of $P_2(x)$ in \eqref{P1}, we establish global convergence of the sequence generated by our algorithm and analyze its convergence rate. Our numerical experiments show that our algorithm usually outperforms the proximal DCA and GIST for two classes of DC regularized least squares problems.

{\bf Acknowledgement}. The authors would like to thank the two anonymous referees for their helpful comments.

\end{document}